\newtheorem{thm}{Theorem}[section]
\theoremstyle{definition}
\newtheorem{ex}[thm]{Example}
\numberwithin{equation}{section}
\def\BMT{\lower0.9em\hbox{\includegraphics{BMT.pdf}}}
\def\TMB{\lower0.9em\hbox{\includegraphics{TMB.pdf}}}
\def\Msplit{\lower0.8em\hbox{\includegraphics{Msplit.pdf}}}
\def\Tsplit{\lower1.0em\hbox{\includegraphics{Tsplit.pdf}}}
\def\Bsplit{\lower1.0em\hbox{\includegraphics{Bsplit.pdf}}}
\def\NewTsplit{\lower1.0em\hbox{\includegraphics{Tsplit2.pdf}}}
\def\NewBsplit{\lower1.0em\hbox{\includegraphics{Bsplit2.pdf}}}
\def\Ufirstsplit{\lower0.7em\hbox{\includegraphics{Ufirstsplit.pdf}}}
\def\Usecondsplit{\lower0.7em\hbox{\includegraphics{Usecondsplit.pdf}}}
\begin{document}
\title[Bipyramids and Bounds on Volumes of Hyperbolic Links]
{Bipyramids and Bounds on Volumes of Hyperbolic Links}

\date{\today}
\author[Colin Adams]{Colin Adams}
\address{Department of Mathematics and Statistics, Williams College, Williamstown, MA 01267}
\email{Colin.C.Adams@williams.edu}

\begin{abstract}We utilize ideal bipyramids to obtain new upper bounds on volume for hyperbolic link complements in terms of the combinatorics of their projections. \end{abstract}
\maketitle

\section{Introduction}\label{S:intro}  Knots fall into three disjoint categories: torus knots, satellite knots (including composite knots) and hyperbolic knots. A knot is hyperbolic if its complement $S^3-K$ carries a hyperbolic metric. Such a metric is uniquely determined and hence, the hyperbolic volume of its complement becomes an invariant that can be used to distinguish it from other knots. 

 Given a projection $P$ of a knot, we will say it is {\it reduced} if there are no Type I or Type II Reidemeister moves that would individually lower the number of crossings, or a circle in the projection plane that intersects the knot in only one crossing, with two strands to either side of it. In a reduced projection, define  a {\it maximal bigon chain} to be a sequence of bigonal complementary regions touching end-to-end that is as long as possible. The {\it crossing length} of such a bigon chain is the number of crossings it contains. Note that a crossing that does not lie on a bigon is considered to be a maximal bigon chain of crossing length 1. The number of such maximal bigon chains in a projection $P$ is called the {\it twist number} of the projection, denoted $t(P)$. We further let $t_i(P)$ be the number of maximal bigon chains of crossing length $i$ and $g_i(P)$ be the number of maximal bigon chains of crossing number at least $i$. We typically assume that our diagrams are twist reduced, which mean flypes have been applied to minimize the number of distinct bigon chains.

In a variety of papers, there have been attempts to determine bounds on hyperbolic volume from projections of knots and links. In \cite{Adams0} it was proved that with the exception of the figure-eight knot, a hyperbolic knot satisfies $\mbox{vol}(S^3-K) \leq (4c-16)v_{tet}$, where $c$ is the crossing number and $v_{tet}$ is the volume of an ideal regular tetrahedron, approximately $1.01494$. As it is obtained by decomposing the link complement into ideal tetrahedra, we call this the tetrahedral bound.

In an appendix by Ian Agol and Dylan Thurston to \cite{Lack}, it was proved that if a hyperbolic knot or link $L$ has  a reduced alternating projection $P$, then $\mbox{vol}(S^3 -L) \leq 10 v_{tet} (t(P)-1)$. We refer to this as the AT bound on volume. 

In \cite{DT}, Dasbach and Tsvietkova refined this bound so that if $L$ is a hyperbolic alternating link in a reduced alternating projection $P$, then $\mbox{vol}(S^3-L) \leq(4t_1(P) + 6t_2(P) + 8t_3(P) + 10g_4(P) -a)v_{tet}$ where $a=10$ when $g_4 \ge 1 $, $a = 7$ when $g_4= 0$ but $t_3 \ge 1$ and $a=6$ otherwise. We refer to this as the DT bound on volume.

There is another approach to bounding volume that follows from a construction of  D. Thurston. One places an octahedron at each crossing with its top vertex on the bottom of the overstrand and its bottom vertex on the top of the understrand as in Figure \ref{octahedron}. 

\begin{figure}[h]
\begin{center}
\includegraphics[scale=0.7]{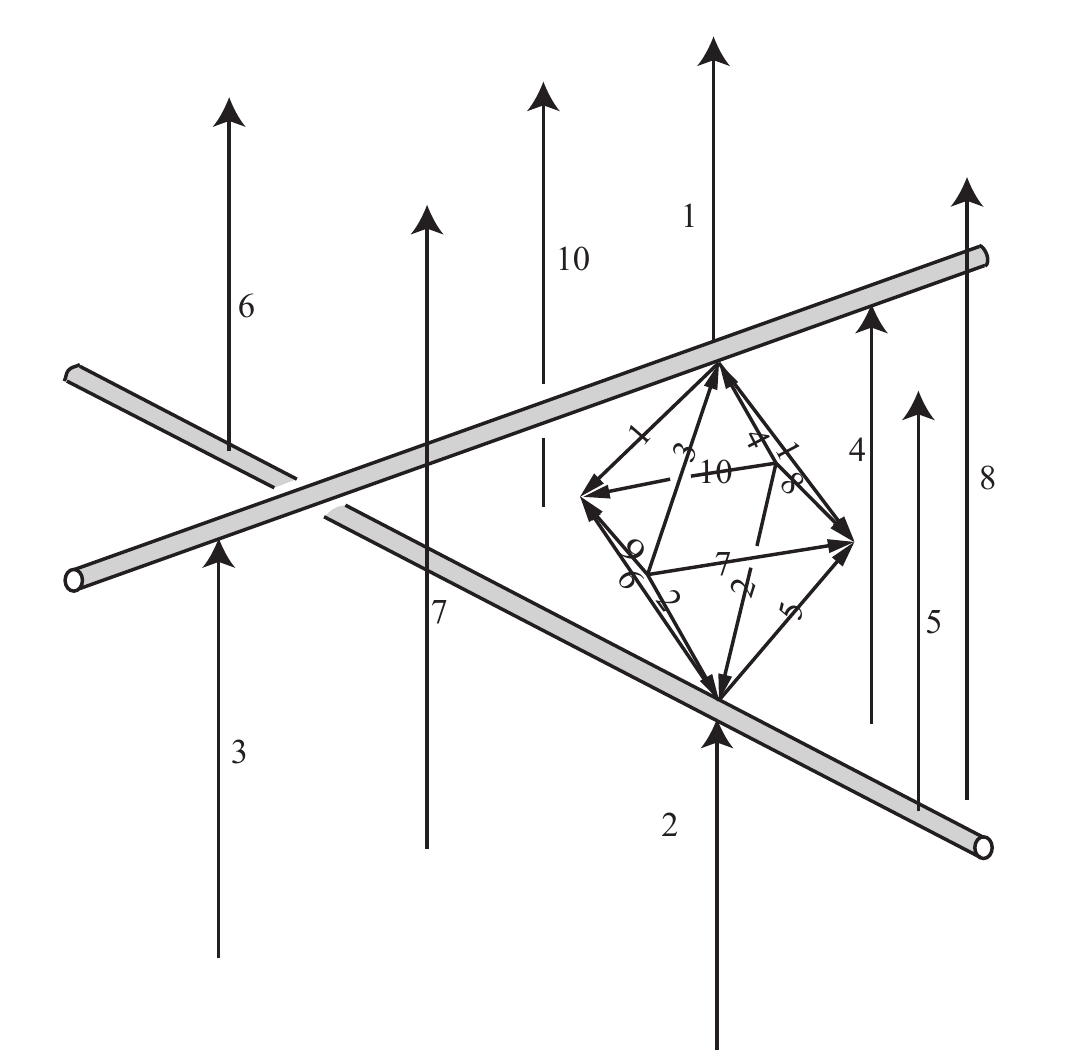}
\caption{Placing an octahedra between each crossing.}
\label{octahedron}
\end{center}
\end{figure}

Then, as per the edge labelings in Figure \ref{octahedron}, one pulls two of the opposite remaining vertices up to meet above the crossing at a point denoted $U$, thereby identifying two edges on the octahedron, and pulls the remaining two vertices down to meet below the crossing at a point denoted $D$, again identifying two edges of the octahedron.Then one can glue together the faces on the octahedra at the various crossings in order to fill the complement of the knot. If the two vertices at $U$ and $D$ are included, we have a decomposition of the knot complement into octahedra with some finite vertices and some ideal vertices. This construction has been used to attempt to prove Kaeshaev's Volume Conjecture for various categories of knots. See \cite{Mur} and \cite{Yok} for instance. However, one also obtains an upper bound for the hyperbolic volume of the knot or link complement since  an octahedron in hyperbolic 3-space has  volume at most the volume of an ideal regular octahedron, which is $v_{oct} \approx 3.6638$. So $\mbox{vol}(S^3 -L) \leq  c  v_{oct}$. This bound was improved in \cite{Adams5} by pulling the finite vertices to ideal vertices.

\begin{thm}\label{volumethm} Let $L$ be a hyperbolic knot or link with $c \geq 5$ crossings. Then \\ $vol(S^3 -L) \leq  (c - 5) v_{oct} + 4 v_{tet}$.
\end{thm}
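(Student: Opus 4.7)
The plan is to begin from D. Thurston's octahedral decomposition of $S^3-L$ into $c$ octahedra described in the introduction, and to refine it locally using the maximum-volume principle. Each octahedron has two finite vertices ($U$ above and $D$ below the crossing) together with four vertices lying on the link, hence ideal in $S^3-L$. Straightening the decomposition and invoking the fact that any ideal hyperbolic octahedron has volume at most $v_{oct}$ gives the naive bound $\mbox{vol}(S^3-L)\le c\,v_{oct}$. To improve this to $(c-5)v_{oct}+4v_{tet}$ we must recover a savings of $5v_{oct}-4v_{tet}$ by modifying the decomposition around five chosen crossings.

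Using $c\ge 5$, I would pick any crossing together with four neighboring crossings that share faces with it, so that the five corresponding octahedra sit in a contiguous region of $S^3-L$. I would then perform an isotopy of $L$ that pulls the finite apex vertices $U$ and $D$ of these five octahedra out to ideal vertices already present on the link. As each apex becomes ideal and is identified with an adjacent link vertex, its octahedron degenerates into a pyramid or tetrahedron, and one checks that the five resulting cells can be reassembled into exactly four ideal tetrahedra with face identifications consistent with those of the remaining $c-5$ octahedra. The maximum-volume principle applied to each ideal tetrahedron bounds its volume by $v_{tet}$, while the remaining $c-5$ octahedra each contribute at most $v_{oct}$.

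Summing the bounds over the modified decomposition gives the stated inequality. The main obstacle is the combinatorial-geometric step of verifying the local five-for-four replacement: one must exhibit a concrete arrangement of five adjacent octahedra whose apexes can be simultaneously pulled to ideal vertices, check that the resulting cells assemble into four ideal tetrahedra covering the same subregion of $S^3-L$, and confirm that their boundary identifications match those of the surrounding octahedra without collapsing the decomposition. The existence of such a configuration in every reduced diagram of a hyperbolic link with $c\ge 5$ crossings is where the hypothesis $c\ge 5$ enters, and the careful bookkeeping of how the apexes collapse into the cusps of $L$ is the technical heart of the argument. Once this local replacement is established, the theorem follows immediately by straightening each cell to its geodesic ideal representative and summing the resulting volume bounds.
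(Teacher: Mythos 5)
First, note that the paper does not actually prove this theorem; it is quoted from \cite{Adams5}, where it is established by collapsing edges that run from a strand of the link up to $U$ and down to $D$, thereby pulling the two finite vertices of the octahedral decomposition onto the cusp. Your overall strategy --- start from the $c\,v_{oct}$ octahedral bound and recover a savings of $5v_{oct}-4v_{tet}$ by degenerating octahedra once the finite vertices are made ideal --- is the right one in spirit, but the step you yourself flag as ``the technical heart'' is exactly the content of the theorem, and the configuration you propose for it is not the one that works. The vertices $U$ and $D$ are each a \emph{single} vertex of the whole decomposition, shared by all $c$ octahedra, so you cannot ``pull the apexes of these five octahedra'' to the cusp as a local move confined to five cells; you must collapse one specific edge from $U$ to the link and one from $D$ to the link, and then track the entire equivalence classes of edges and faces that are forced to collapse under the face identifications. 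That bookkeeping, not a free choice of five mutually adjacent crossings, is what determines which octahedra die and what they degenerate into.

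Concretely, the collapse in \cite{Adams5} (echoed in the $g_2=0$ case of the proof of Theorem \ref{thm:bigonvolume} in this paper) is organized around an edge of the projection graph, i.e.\ a strand joining two crossings: collapsing the vertical edges from that strand to $U$ and to $D$ flattens the two octahedra at its endpoints completely, and forces the central vertical edges of the two adjacent complementary regions to collapse, which degenerates the octahedra at the remaining crossings of those regions into pairs of tetrahedra --- it does not reassemble five octahedra into four tetrahedra covering the same region. Your picture of ``a crossing and four neighbours'' whose cells recombine into exactly four ideal tetrahedra is asserted rather than verified, and as stated it accounts neither for the global nature of $U$ and $D$ nor for the need to check that the affected octahedra are pairwise distinct (this is where reducedness of the diagram, and not merely the count $c\ge 5$, enters). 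Until you exhibit the actual edge collapse, compute the edge classes it forces to degenerate, and verify that the total volume drop is at least $5v_{oct}-4v_{tet}$, the proposal is a plausible plan rather than a proof.
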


We call this the octahedral bound.

In this paper, we utilize Thurston's octahedral construction to generate several other constructions in terms of bipyramids, one based on bigon chains and one based on faces of the projection. Both give improved upper bounds on volume. For instance, the first construction, which is described in Section 3, yields the following improvement on the DT bound. 

\newtheorem*{thm:bigonvolume}{Theorem \ref{thm:bigonvolume}}
\begin{thm:bigonvolume} Let $L$ be a  hyperbolic alternating link with five or more crossings in a reduced twist reduced alternating diagram that has at least three twist regions and that is not the Borromean rings. Then $\mbox{vol}(S^3-L) < t_1 v_{oct} +t_2(6 v_{tet}) + t_3 (7.8549\dots) + t_4(9.2375\dots) + g_5 (10 v_{tet}) - a$, where if $g_2 = 0$, then $a= 15.4972$, if $g_3 = 0$ but $t_2 \ge 1$, then $a = 11 v_{tet}$, if $g_4 = 0$ but $t_3 \ge 1$, then $ a =10.088$,  if $g_5 = 0$ but $t_4 \ge 1$, then, $a = 10.2873$ and if $g_5\ge 1$, then $a = 12.111$. 
\end{thm:bigonvolume}

This bigon chain bipyramid bound is called the BCB bound.

The second construction, which is described in Section 4, yields a means of obtaining a bound on volume in terms of the faces of the projection of any hyperbolic link. This face-centered bipyramid bound is called the FCB bound. In the cases of the $4_1$ and $6_2^2$, it yields the exact volume. For small volume knots, it appears to yield the smallest upper bound on volume of the various bounds discussed here. In Section 5, we compare the different bounds.

Note that various authors, in addition to considering upper bounds either in general or for specific classes of knots and links, have also determined lower bounds. See for instance \cite{FG}, \cite{FKP} and  \cite{Lack}.

Additional applications of the bipyramid construction appear in \cite{Adams7}, where it is used to obtain volume density results for 2-bridge links and \cite{Adams8}, where it is generalized and used to obtain volume density results for links in $S$ x I where $S$ is a surface.

\section*{Acknowledgement} Thanks to Aaron Calderon, Alexander Kastner, Xinyi Jiang, Gregory Kehne, and Nathaniel Mayer, for very helpful discussions and Mia Smith for the same and particularly for pointing out improvements to Theorem 2.2. Thanks also to NSF Grant DMS-1347804, and Williams College, which fund the SMALL REU program at Williams College.

\section{Ideal Bypyramids}

An ideal $n$-bypyramid $B_n$ appears as in Figure \ref{bipyramid}. We call such an ideal hyperbolic bipyramid {\it regular} if if can be constructed by gluing together $n$ ideal tetrahedra along a central edge, each isometric to the  ideal tetrahedron $T_n = T(2\pi/n, \frac{(n- 2)pi}{2n}, \frac{(n- 2)\pi}{2n})$, where each tetrahedron intersects the central edge with an edge of diherdral angle $2\pi/n$. Notice that all of the vertical edges of the resulting bipyramid have dihedral angle $\frac{(n- 2)\pi}{n}$, as do the edges going to the central vertex, while the edges connecting vertices that are neither the central vertex nor the vertex at $\infty$ have dihedral angle $2\pi/n$. The volume of $T_n$ is given by: $$\mbox{vol}(T_n) = \int_0^{2\pi/n} -2 ln(\sin \theta) \, d\theta + 2 \int_0^{\pi(n-1)/2n} -2 ln(\sin \theta) \, d\theta $$

\begin{figure}[h]
\begin{center}
\includegraphics[scale=0.5]{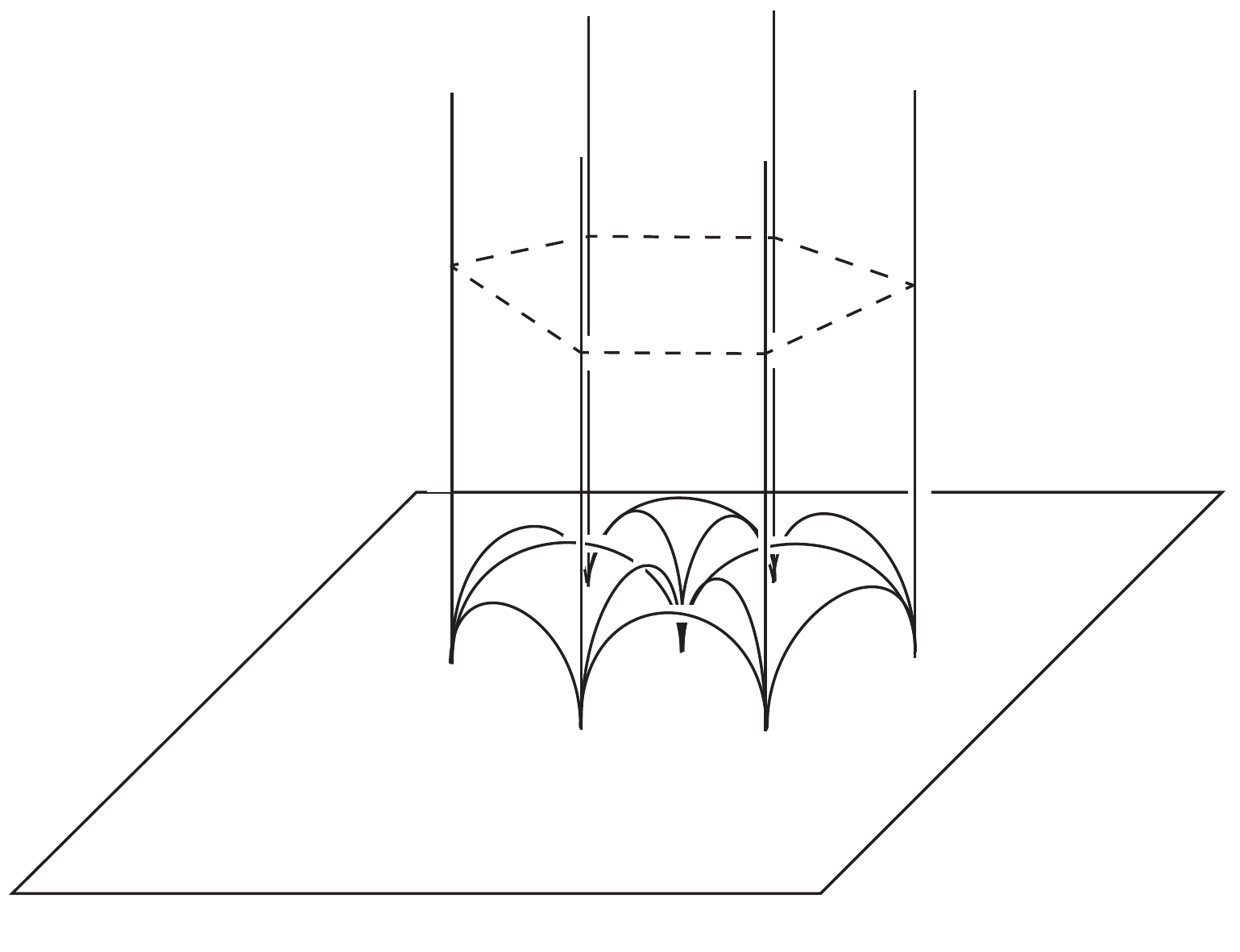}
\caption{An $n$-bipyramid for $n=6$.}
\label{bipyramid}
\end{center}
\end{figure}

\begin {thm} The maximal volume for an ideal $n$-bipyramid is obtained uniquely by the regular $n$-bipyramid.
\end{thm}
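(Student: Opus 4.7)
The plan is to decompose $B_n$ into ideal tetrahedra along its central edge and to maximize the volume in two stages: first over the ``radii'' of the equatorial vertices with the central dihedral angles held fixed, and then over those dihedral angles. Normalize via an isometry so that the two apex vertices lie at $0$ and $\infty$ in the upper half-space model, and write the equatorial vertices as $z_i = r_i e^{i\phi_i}$ in cyclic order, with $\theta_i := \phi_{i+1} - \phi_i$ and $\sum_i \theta_i = 2\pi$. The $n$ ideal tetrahedra $T_i$ with vertex set $\{0, \infty, z_i, z_{i+1}\}$ each have dihedral angle $\theta_i$ at the central edge, and Milnor's formula gives
\[
V(T_i) = \Lambda(\theta_i) + \Lambda(\alpha_i) + \Lambda(\pi - \theta_i - \alpha_i),
\]
where $\alpha_i$ is determined by the ratio $r_{i+1}/r_i$ and equals $(\pi - \theta_i)/2$ exactly when $r_i = r_{i+1}$, i.e., when $T_i$ is isoceles across the central edge.

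First I would fix the $\theta_i$'s and maximize over the radii. For any fixed $\theta \in (0, \pi)$, the function $\alpha \mapsto \Lambda(\alpha) + \Lambda(\pi - \theta - \alpha)$ has unique interior critical point $\alpha = (\pi - \theta)/2$, which is a local max via $\Lambda''(x) = -\cot x$, and which beats the boundary value $\Lambda(\pi - \theta)$ thanks to the duplication identity $2\Lambda(\psi) - \Lambda(2\psi) = 2\Lambda(\pi/2 - \psi) > 0$ on $(0, \pi/2)$. Applying this termwise shows each $V(T_i)$ is individually maximized in the isoceles configuration. The $n$ isoceles conditions $r_i = r_{i+1}$ are simultaneously realized by $r_1 = \cdots = r_n$, so this stage pins down the unique radial maximizer up to the remaining scaling freedom.

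Under this reduction, $V(B_n) = \sum_{i=1}^{n} g(\theta_i)$ where $g(\theta) := \Lambda(\theta) + 2\Lambda((\pi - \theta)/2)$. Using $\sin((\pi - \theta)/2) = \cos(\theta/2)$ together with $\sin \theta = 2\sin(\theta/2)\cos(\theta/2)$, a short calculation collapses the derivative to $g'(\theta) = -\ln(2\sin(\theta/2))$ and hence $g''(\theta) = -\tfrac{1}{2}\cot(\theta/2) < 0$ on $(0, \pi)$. Thus $g$ is strictly concave on the admissible interval, and by strict Jensen (or Lagrange multipliers) the maximum of $\sum g(\theta_i)$ subject to $\sum \theta_i = 2\pi$ is attained uniquely at $\theta_i = 2\pi/n$. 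Combining this with the isoceles reduction identifies the unique maximizer as the regular $n$-bipyramid.

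The main obstacle is justifying the two-stage optimization cleanly, i.e., confirming that the radial maximum for fixed $\theta_i$'s is attained in the interior rather than at a degenerate boundary where some $r_i \to 0$ or $\infty$ collapses a tetrahedron. Once this compactness/boundary step is in place, the elegance of the identity $g'(\theta) = -\ln(2\sin(\theta/2))$ makes the final one-variable concavity step essentially automatic.
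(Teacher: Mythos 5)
Your proof is correct. It starts from the same decomposition of $B_n$ into $n$ ideal tetrahedra sharing the central edge, but the optimization is run quite differently from the paper's. The paper treats all $3n$ dihedral angles $\alpha_i,\beta_i,\gamma_i$ as variables subject only to $\alpha_i+\beta_i+\gamma_i=\pi$ and $\sum\alpha_i=2\pi$ and applies Lagrange multipliers, exhibiting the regular configuration as the interior critical point. You instead parameterize by the actual positions $z_i=r_ie^{i\phi_i}$ of the equatorial vertices and optimize in two stages: first each tetrahedron is individually maximized by the isoceles (equal-radius) shape, with the duplication identity ruling out the degenerate boundary, and then strict concavity of $g(\theta)=\Lambda(\theta)+2\Lambda((\pi-\theta)/2)$, via the identity $g'(\theta)=-\ln(2\sin(\theta/2))$, plus Jensen finishes the angular step. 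Your route buys two things the paper leaves implicit: (i) a critical-point computation alone does not certify a global maximum nor exclude maxima on the boundary of the constraint region, whereas strict concavity does both and gives uniqueness outright; (ii) the paper's angle variables are not all independent --- the tetrahedra must close up around the central edge, imposing the holonomy condition $\prod_i(\sin\gamma_i/\sin\beta_i)=1$ that its constraint list omits (harmlessly, since the relaxed maximum still bounds the true one and the relaxed maximizer happens to be feasible) --- while your vertex coordinates are genuinely free, so no relaxation is needed. Finally, the ``main obstacle'' you flag at the end is already dispatched by your own boundary comparison $2\Lambda(\psi)-\Lambda(2\psi)=2\Lambda(\pi/2-\psi)>0$: each summand's radial maximum is interior, hence so is the sum's.
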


\begin{proof} An $n$-bipyramid can always be decomposed into n ideal tetrahedra, meeting at the central edge. Let them be denoted $T_1, T_2, \dots T_n$, where $T_i$ has angles $\alpha_i$ at the central edge and then $\beta_i$ and $\gamma_i$, reading clockwise. Then $\alpha_1 + \alpha_2 + \dots + \alpha_n = 2 \pi$. Let $f$ be the volume of the $n$-bipyramid. Then $$f= \sum_{i=1}^n \left(\int_0^{\alpha_i} - \ln{(2 \sin \theta)} d \theta +   \int_0^{\beta_i} - \ln{(2 \sin \theta)} d \theta +   \int_0^{\gamma_i} - \ln{(2 \sin \theta)} d \theta\right)$$.

We use Lagrange Multipliers to maximize $f$ subject to the following constraints:

For $i = 1, \dots , n$:

$$g_i = \alpha_i + \beta_i + \gamma_i - \pi = 0$$

Additionally, $$g_{n+1} =\alpha_1 + \alpha_2 + \dots + \alpha_n - 2 \pi = 0$$

Then we generate the following equations:

For $i = 1, \dots , n$:

$$-\ln(2\sin \alpha_i)= \lambda_i + \mu, \hspace{.1in}
-\ln(2\sin \beta_i)= \lambda_i ,\hspace{.1in}
-\ln(2\sin \gamma_i)= \lambda_i $$

From the second and third equations, we see that $\beta_i = \gamma_i$ for all $i$.
From the first and second equations, we have that for $i = 1, \dots , n$:

$$\mu = \ln(2\sin \beta_i)-\ln(2\sin \alpha_i)$$

Thus, $$\frac{\sin \beta_1}{\sin \alpha_1} = \frac{\sin \beta_2}{\sin \alpha_2} = \dots = \frac{\sin \beta_n}{\sin \alpha_n} $$

Since, $\beta_ i = \frac{\pi - \alpha_i}{2}$, this implies that $\alpha_1 = \alpha_2 = \dots = \alpha_n$.
Hence, $\alpha_i = \frac{2\pi}{n}$ and $\beta_i = \gamma_i = \frac{n-2}{n} \pi$, yielding a regular $n$-bipyramid.
\end{proof}

Volumes for the $n$-bipyramids for certain values of $n$ are provided in Table \ref{bipyramidvol}. 
\begin{table}[!ht]
\begin{center} 
  \begin{tabular}{|| c | c || }
\hline
n    & Volume   \\ \hline
2   &    0    \\ \hline    
3   &    2.0298    \\ \hline      
4   &    3.6638    \\ \hline      
5   &    4.9867    \\ \hline      
6   &    6.0896    \\ \hline      
7   &    7.0325    \\ \hline      
8   &    7.8549    \\ \hline      
9   &    8.5836    \\ \hline      
10   &    9.2375    \\ \hline      
11   &    9.8304    \\ \hline      
12   &    10.3725    \\ \hline  
13   &    10.8719    \\ \hline      
14   &    11.3347   \\ \hline      
20   &    13.5668    \\ \hline      
100   &    23.6709    \\ \hline      
1,000   &   38.1382    \\ \hline 
1,000,000   &    81.5409    \\ \hline      
1,000,000,000   & 124.944      \\ \hline                   
  \end{tabular}
   \caption{Volumes of $n$-bypyramids.}
   \label{bipyramidvol}
\end{center}
\end{table}

Note that with the exception of $n=6$, the volume of the corresponding $n$-bipyramid is always less than the volume that would be obtained by taking the sum of the volumes of $n$ regular tetrahedra. It is this volume saving that we take advantage of.

As n increases, the volumes of the $n$-bipyramids increase without bound. We show that the volumes grow logarithmically. 

\begin{thm} $\mbox{vol}(B_n) < 2\pi \ln (n/2)$ for $n \geq 3$ and $\mbox{vol}(B_n)$ grows asymptotically like $2\pi \ln (n/2)$.
\end{thm}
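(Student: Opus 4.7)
The plan is to reduce the volume formula to a single Lobachevsky integral and then bound it with elementary Taylor estimates for $\sin$.

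First I would use the decomposition of $B_n$ into $n$ copies of $T_n$ together with the Milnor formula for the volume of an ideal tetrahedron to write, in terms of the Lobachevsky function $\Lambda(x):=-\int_0^x\ln|2\sin t|\,dt$,
$$\mbox{vol}(B_n) = n\,\Lambda(2\pi/n) + 2n\,\Lambda(\pi/2-\pi/n).$$
The key simplification is the duplication identity
$$\Lambda(2\theta) = 2\Lambda(\theta) - 2\Lambda(\pi/2-\theta),$$
which comes from $2\sin(2t)=(2\sin t)(2\sin(\pi/2-t))$ after a substitution $s=\pi/2-u$ in the defining integral. Applied with $\theta=\pi/n$, this causes the two $\Lambda(\pi/2-\pi/n)$ contributions to cancel, leaving the clean identity $\mbox{vol}(B_n)=2n\,\Lambda(\pi/n)$.

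Next I would upper-bound $\Lambda(\pi/n)$ by splitting $\ln(2\sin t)=\ln(2t)+\ln(\sin t/t)$. The inequality $\sin t\ge t-t^3/6$ for $t\ge 0$ (proved by two differentiations and integrating from $0$) gives $\ln(\sin t/t)\ge\ln(1-t^2/6)$, and the elementary bound $-\ln(1-u)\le u+u^2$ for $u\in[0,1/2]$ applies since $t^2/6\le\pi^2/54<1/2$ whenever $n\ge 3$ and $t\in[0,\pi/n]$. Integrating termwise yields
$$\Lambda(\pi/n) \le \frac{\pi}{n}\ln\!\frac{en}{2\pi} + \frac{(\pi/n)^3}{18} + \frac{(\pi/n)^5}{180}.$$
Multiplying by $2n$ and using the identity $2\pi\ln(n/2)-2\pi\ln(en/(2\pi))=2\pi(\ln\pi-1)\approx 0.909$, the desired strict inequality reduces to
$$\frac{\pi^3}{9n^2}+\frac{\pi^5}{90n^4} < 2\pi(\ln\pi-1).$$
The left side is decreasing in $n$, hence maximized at $n=3$, where it is approximately $0.425$; so the inequality holds with room to spare for every $n\ge 3$.

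For the asymptotic claim, the expansion $-\ln(\sin t/t) = t^2/6 + O(t^4)$ as $t\to 0$ makes the remainder integral above actually $O(n^{-3})$, so multiplying by $2n$ yields
$$\mbox{vol}(B_n) = 2\pi\ln(n/2) + 2\pi(1-\ln\pi) + O(n^{-2}).$$
Dividing by $2\pi\ln(n/2)$ and letting $n\to\infty$ shows $\mbox{vol}(B_n)/(2\pi\ln(n/2))\to 1$, establishing the asymptotic. The main obstacle is purely bookkeeping: the leading-order growth rate is transparent from the $(\pi/n)\ln(n/(2\pi))$ term in the Taylor expansion of $\Lambda(\pi/n)$, but producing a strict inequality valid for \emph{every} $n\ge 3$ requires tracking the Taylor remainder constants. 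The identity $\mbox{vol}(B_n)=2n\,\Lambda(\pi/n)$ is what makes these estimates clean, since otherwise one would have to balance two separate Lobachevsky contributions against each other.
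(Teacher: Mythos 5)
Your proof is correct, and it takes a genuinely different route from the paper's. The paper works directly with the integral formula for $\mbox{vol}(T_n)=\mbox{vol}(B_n)/n$: the asymptotic is obtained by applying l'H\^{o}pital's rule to $\mbox{vol}(B_n)/\ln(n/2)$, and the strict inequality by a derivative comparison --- after checking $n=3,4$ by hand, one shows $\frac{d}{dn}\,\mbox{vol}(T_n)>\frac{d}{dn}\bigl(2\pi\ln(n/2)/n\bigr)$ for $n\ge 4$ (this reduces to $n\sin(\pi/n)>e$), so the difference, which tends to $0$, can never become positive. You instead collapse the three Lobachevsky terms via the duplication identity into the closed form $\mbox{vol}(B_n)=2n\Lambda(\pi/n)$ and then run elementary Taylor estimates on $\Lambda$ near $0$. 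Your numerics check out ($2\pi(\ln\pi-1)\approx 0.909$ against an error of at most $\pi^3/(9n^2)+\pi^5/(90n^4)\approx 0.425$ at $n=3$, decreasing thereafter), and your route buys strictly more: the two-term expansion $\mbox{vol}(B_n)=2\pi\ln(n/2)+2\pi(1-\ln\pi)+O(n^{-2})$, together with the one-sided bound $\Lambda(x)\ge x\ln(e/(2x))$, identifies $2\pi/e\approx 2.311$ as the asymptotically optimal constant in a bound of the form $2\pi\ln(n/C)$, which is the content of (and for large $n$ sharpens) the paper's closing remark about replacing $2$ by $2.1818$. Two details worth making explicit: the duplication identity in the form you quote uses $\Lambda(\pi/2)=0$, i.e.\ the classical evaluation $\int_0^{\pi/2}\ln(2\sin t)\,dt=0$, and the identity $\mbox{vol}(B_n)=2n\Lambda(\pi/n)$ can be sanity-checked against the paper's table, e.g.\ $8\Lambda(\pi/4)=v_{oct}\approx 3.6638$ at $n=4$.
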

\begin{proof}  We show the second fact first.

\begin{align}
 \lim_{n \to \infty} \frac{\mbox{vol}(B_n)}{\ln (n/2)} &  =  \lim_{n \to \infty} \frac{n\mbox{vol}(T_n)}{\ln (n/2)} = 
  \lim_{n \to \infty} \frac{\mbox{vol}(T_n)}{(\ln (n/2) )/n}\\
& = \lim_{n \to \infty} \frac{-\ln{\left(2 \sin{\left(\frac{2\pi}{n}\right)}\right)} \frac{-2\pi}{n^2} + -2 \ln{\left(2\sin{\left(\frac{(n-2)\pi}{2n}\right)}\right)} \frac{\pi}{n^2}}{\frac{1- \ln (n/2)}{n^2}} \\
&= \lim_{n \to \infty} \frac{2\pi \ln{\left(2\sin{\left(\frac{2\pi}{n}\right)}\right)} - 2 \pi \ln \left( 2 \sin {\left(\frac{(n-2)\pi}{2n}\right)} \right)}{1-\ln (n/2)} =  \lim_{n \to \infty} \frac{2\pi \ln{\left(2\sin{\left(\frac{2\pi}{n}\right)}\right)}}{1-\ln (n/2)}\\
&  = \lim_{n \to \infty}\frac{\frac{-2 \cos \frac{2 \pi}{n}}{2\sin\frac{2\pi}{n}} \frac{(2\pi)^2}{n^2}}{\frac{-1}{n}}= 2\pi
\end{align}

Hence, the volumes of the $B_n$ grow asymptotically like $f(n) = 2\pi \ln (n/2)$. 

To see that $\mbox{vol} (B_n) < 2\pi \ln (n/2)$, we rewrite this as $\mbox{vol}(T_n) < 2\pi \frac{\ln (n/2)}{n}$. Let $f(n) =\mbox{vol} (T_n)$ and let $g(n) = 2\pi \frac{\ln (n/2)}{n}$.   Since these two functions agree asymptotically and since the result does hold for $n=3$ and 4, if it were the case that $f(a) > g(a)$ for some $a$, then it would have to be true that $f'(b) < g'(b)$  for some $b > a$. So we will prove that $f'(x) > g'(x)$ for all $x \ge 4$.

$$ f'(n)= \frac{2\pi}{n^2} \ln \left(\frac{\sin\left(\frac{2\pi}{n}\right)}{\sin\left(\frac{(n-2)\pi}{2n}\right)} \right)
= \frac{2\pi}{n^2} \ln \left(\frac{\sin\left(\frac{2\pi}{n}\right)}{\cos\left(\frac{\pi}{n}\right)}\right) =  \frac{2\pi}{n^2} \ln \left(2 \sin \left(\frac{\pi}{n}\right)\right)$$

$$g'(n) = 2\pi \left(\frac{1 - \ln (n/2)}{n^2}\right)$$

Then if $f'(b) < g'(b)$, we have

$$ \ln \left(2 \sin \left(\frac{\pi}{b}\right)\right) < 1 - \ln b/2$$

$$ b  \sin \left(\frac{\pi}{b}\right) < e$$

However, at $b=4$,  $b \sin \left(\frac{\pi}{b}\right) \approx 2.828 > e$ and $b \sin \left(\frac{\pi}{b}\right)$ is an increasing function approaching $2 \pi$, so this inequality never holds for $b \ge 4$.
\end{proof}

Note that the proof can be altered to show the slightly stronger result that $\mbox{vol}(B_n) < 2\pi \ln ( n/2.1818)$.

\section{Conjoining Octahedra}

In this section we consider sequences of octahedra in the Thurston construction that lie at the crossings of a maximal bigon chain. Consider two octahedra at the crossings of a bigon as in Figure \ref{bigonoctahedra}. They are glued together along four faces on each, however, we will focus on the two shaded faces on each. These are glued together as squares with a diagonal and no rotation. Hence we can glue the two octahedra together along this pair of faces to obtain a 6-bipyramid. If there is a subsequent bigon in the maximal bigon chain, then it can be glued onto the 6-bipyramid to create an 8-bipyramid. Thus, the octahedra that make up a maximal bigon chain of crossing length $n$ can be consolidated into a single $(2n+2)$-bipyramid. Thus, if a reduced projection $P$ has twist number $t(P)$, then the complement can be decomposed into $t(P)$ bipyramids, each with two ideal vertices and the rest divided into two equivalence classes of finite vertices. 

\begin{figure}[h]
\begin{center}
\includegraphics[scale=0.7]{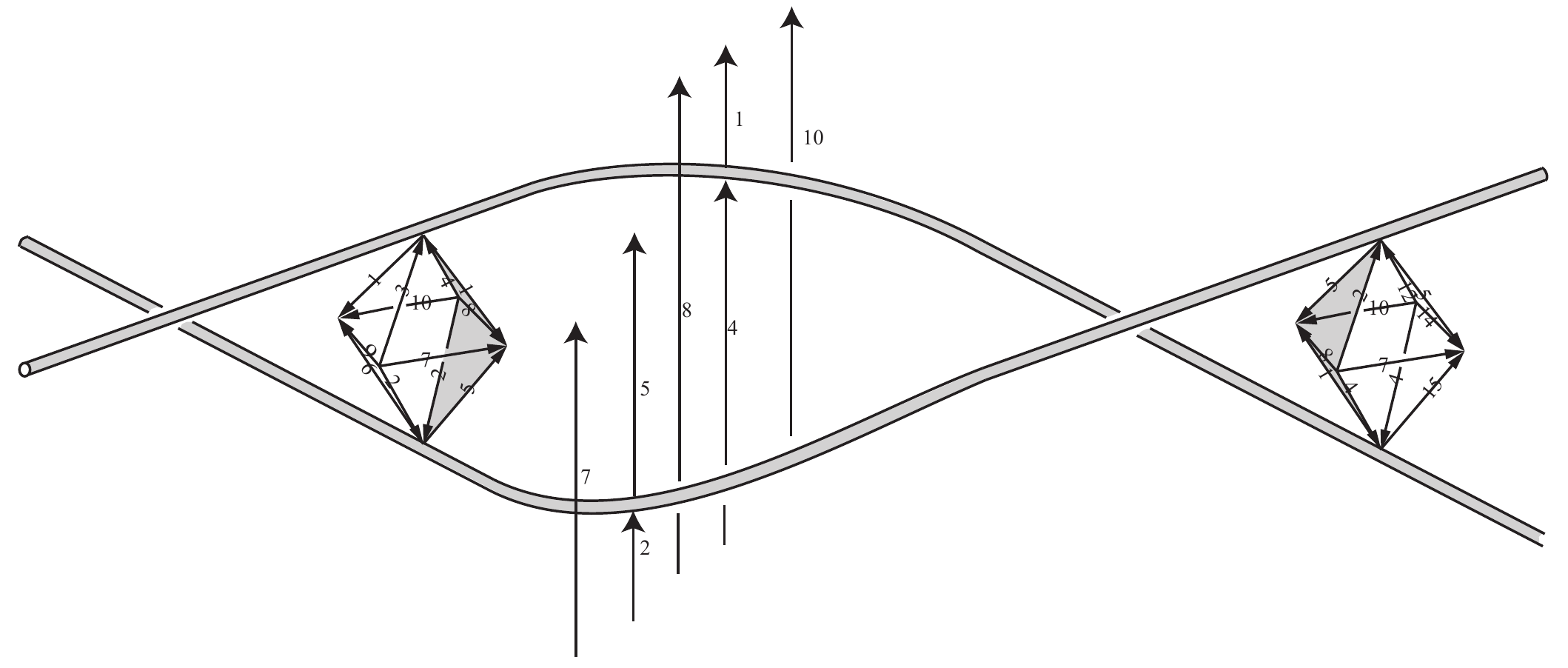}
\caption{Octahedra at the crossings of a bigon.}
\label{bigonoctahedra}
\end{center}
\end{figure}

Note that in the case of a maximal bigon chain of length $n$, if the Thurston construction is applied, there will be $n$ octahedra, and the contribution to the volume from that sequence of crossings will be on the order of $n (3.6638\dots)$. The volume associated with the corresponding $(2n+2)$-bipyramid will be substantially less. For instance, for $n = 499$, the octahedra produce a volume bound of  1828.2, whereas the corresponding1000-bipyramid yields a volume bound of 38.1.

On the other hand, in the Agol-Thurston method, we can augment each bigon chain with an additional link component, and obtain a bound from the bigon chain of just $10 v_{tet}$. Thus, in order to minimize the volume bound, it makes sense to use a $(2n+2)$-bipyramid when its volume is less than $10 v_{tet}$ and to augment with an additional component when its volume is greater than $10v_{tet}$. Hence, by considering the table of volumes of $n$-bipyramids, we see that we should use use the $(2n+2)$-bipyramid for maximal bigon chains of crossing length $n \leq 4$ and use the augmenting component for all chains of crossing length $n > 4$. However, to do so, we need to know that the two constructions can be fit together.

\begin{thm}
\label{thm:bigonvolume} Let $L$ be a  hyperbolic alternating link with five or more crossings in a reduced twist reduced alternating diagram that has at least three twist regions and that is not the Borromean rings.  Then $\mbox{vol}(S^3-L) < t_1 v_{oct} +t_2(6 v_{tet}) + t_3 (7.8549\dots) + t_4(9.2375\dots) + g_5 (10 v_{tet}) - a$, where if $g_2 = 0$, then $a= 15.4972$, if $g_3 = 0$ but $t_2 \ge 1$, then $a = 11 v_{tet}$, if $g_4 = 0$ but $t_3 \ge 1$, then $ a =10.088$,  if $g_5 = 0$ but $t_4 \ge 1$, then, $a = 10.2873$ and if $g_5\ge 1$, then $a = 12.111$. 
\end{thm}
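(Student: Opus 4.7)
The plan is to combine the bipyramid consolidation described in the paragraph immediately preceding the theorem with the Agol--Thurston augmentation, and then extract the savings term from a careful local analysis of one distinguished bipyramid. First I would place a Thurston octahedron at each crossing and, for each maximal bigon chain of crossing length $n$, consolidate the $n$ octahedra into a single $(2n+2)$-bipyramid along the shaded faces indicated in Figure \ref{bigonoctahedra}. Theorem 2.1 then bounds the volume of each such bipyramid by that of its regular model; from Table \ref{bipyramidvol} this gives $v_{oct}$, $6 v_{tet}$, $7.8549\ldots$, and $9.2375\ldots$ for $n = 1, 2, 3, 4$ respectively.

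For chains of crossing length $n \geq 5$ the regular $(2n+2)$-bipyramid volume exceeds $10 v_{tet}$, so I would instead replace each such chain by an Agol--Thurston augmentation: an unknotted circle encircling the twist region, whose effect on the decomposition is to contribute at most $10 v_{tet}$ to the volume upper bound, independent of $n$. Summing these local contributions over the maximal bigon chains produces the main term $t_1 v_{oct} + t_2(6 v_{tet}) + t_3(7.8549\ldots) + t_4(9.2375\ldots) + g_5(10 v_{tet})$. The nontrivial geometric check here is that the bipyramid decomposition on the short-chain part of the diagram and the augmented decomposition on each long chain glue together consistently along their shared boundary faces; under the alternating, twist-reduced hypothesis this should reduce to a local compatibility check at each crossing adjacent to an augmented region.

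The strict inequality and the savings constant $a$ come from the fact that Theorem 2.1 is strict unless the bipyramid is regular, and in an alternating twist-reduced diagram with at least three twist regions and at least five crossings the gluing equations force at least one bipyramid to be bounded away from regularity. The five cases of the theorem are distinguished by the size of the largest bipyramid that actually appears in the decomposition. In each case I would identify a distinguished bipyramid (or, in the $g_2 = 0$ case, a distinguished cluster of adjacent octahedra) whose hyperbolic shape is sufficiently constrained by its neighbors that one can explicitly compute a volume deficit relative to the regular model; the resulting deficit is exactly the constant $a$. The Borromean rings are excluded because their complement is a union of two regular ideal octahedra, which saturates Theorem 2.1 on every piece and allows no room for savings; the three-twist-region hypothesis rules out the analogous rigid low-complexity configurations.

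The main obstacle will be extracting the precise numerical savings $15.4972$, $11 v_{tet}$, $10.088$, $10.2873$, and $12.111$. The main-term upper bound is essentially bookkeeping with Theorem 2.1 and the Agol--Thurston augmentation, and the interface compatibility is a combinatorial check on the diagram. But pinning down each value of $a$ requires a separate local rigidity analysis in which one enumerates the hyperbolic shapes of the distinguished bipyramid compatible with its neighbors' gluing constraints, minimizes the resulting volume deficit over those shapes, and verifies that the minimum matches the claimed value. I expect each of the five cases to warrant its own sub-lemma, with the $g_2 = 0$ case likely being the most delicate since no large bipyramid is available and the savings must be squeezed out of two or more adjacent octahedra simultaneously.
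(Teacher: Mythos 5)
Your assembly of the main term matches the paper: octahedra at crossings, consolidation of each bigon chain of length $n\le 4$ into a $(2n+2)$-bipyramid bounded by its regular model, and augmentation of longer chains by a vertical circle contributing at most $10v_{tet}$ (the paper makes this last step precise by twisting along the twice-punctured disk, via \cite{Adams2}, to reduce each long chain to two crossings, and then drilling the vertical component out of the resulting 6-bipyramid to get four tetrahedra plus one 6-bipyramid). But your mechanism for the savings term $a$ is a genuine gap. You propose to extract $a$ from the strictness of Theorem 2.1, arguing that the gluing equations force some bipyramid to be ``bounded away from regularity'' and then computing a volume deficit by a local rigidity analysis. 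No such analysis appears in the paper, and it is not clear it could produce explicit constants: quantifying how far a hyperbolic gluing solution must sit from the regular shape is exactly the kind of effective rigidity statement that is very hard to prove, and nothing in the hypotheses pins down the actual shapes of the bipyramids. Moreover, the magnitudes of the constants give the game away: $15.4972 = 7v_{oct} - 10v_{tet}$ is the complete elimination of two octahedra plus the reduction of five more to pairs of tetrahedra, not a perturbative deficit from near-regular pieces.

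The actual source of the savings is that the decomposition still has two finite vertices $U$ and $D$ (above and below the projection plane), and the paper collapses them to the cusp by shrinking an edge from each to an ideal vertex. These edge collapses propagate: they force the central edges of adjacent bipyramids to collapse as well, flattening some bipyramids entirely and degrading others from $B_m$ to $B_{m-1}$ or to unions of tetrahedra. Each of the five cases of $a$ is then a combinatorial worst-case count of which bipyramids must collapse or shrink, with the volume drop computed as a difference of \emph{regular} bipyramid volumes from Table \ref{bipyramidvol}; no non-regularity estimate is ever needed. This also corrects your explanation of the excluded cases: the Borromean rings are excluded not because their complement saturates the octahedral bound, but because in the $g_2=0$ argument one needs an edge of the projection bounding a face with more than three edges, and the standard Borromean diagram has only triangular faces; similarly the three-twist-region hypothesis is what guarantees enough adjacent crossings for the claimed collapses to be distinct. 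Without the vertex-collapsing step your argument proves only the main term with a weak (or no) subtracted constant.
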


\begin{proof} We begin by inserting one octahedron per crossing, as in the original construction of Dylan Thurston.
 For each maximal bigon chain of crossing length $n$ less than 5, we consolidate the corresponding octahedra at each crossing into a single $(2n+2)$-bipyramid, as described previously. For any maximal bigon chain that has crossing length 5 or greater, we add a trivial component that wraps once around the bigon chain, which we call a vertical component. The resulting link $L'$ is hyperbolic by work in \cite{Adams1}, and its volume is greater than the original link complement by work of W. Thurston. From \cite{Adams2}, it follows that we can cut the link complement open along the twice-punctured disk bounded by the new component, and twist to change the number of crossings in the chain from whatever it was to 2, and the resulting link $L''$ will have complement, possibly homeomorphic to the complement of $L'$ and possibly not, with the same volume as the complement of $L'$. 
 
 We now consider the pairs of octahedra corresponding to the two crossings that were created by the augmentation process. For each such pair, we first glue them together along two faces as described above to obtain a 6-bipyramid (see Figure \ref{drilledpair}(a) and (b)). We  then drill a vertical component out of the complement as in Figure \ref{drilledpair}(a) by drilling it out of this 6-bipyramid, as in Figure \ref{drilledpair}(b). By adding edges to the faces of the 6-bipyramid and collapsing the new drilled arcs down to vertices, we obtain an ideal polyhedron as in Figure \ref{drilledpair}(c). This decomposes into four tetrahedra and one 6-bipyramid, as in Figure \ref{drilledpair}(d). Hence, its contribution to the total volume is at most $10v_{tet}$.
 
 Note that the drilling avoids the faces of the 6-bipyramid that are glued to the faces of other bipyramids corresponding to other bigon chains, so gluing faces does yield a manifold homeomorphic to the complement of $L''$.
 
 Finally, we can collapse the finite vertices $U$ and $D$ to the cusp, by choosing edges from each of them to one of the cusps and shrinking the edges away. 
 
 In the case that $g_2 = 0$, so all bigon chains have crossing length 1, We first assume that the projection is not the alternating projection of the Borromean rings. Then there exists an edge in the projection that bounds two regions, at least one of which has more than three edges. As in the proof of Theorem 5.1 in \cite{Adams5}, we collapse a vertical edge reaching from this strand to U and another from this strand to D, as in edges 1 and 2 in Figure \ref{collapse}. This flattens the two octahedra at the crossings on the ends of this strand.
 
 \begin{figure}[h]
\begin{center}
\includegraphics[scale=0.7]{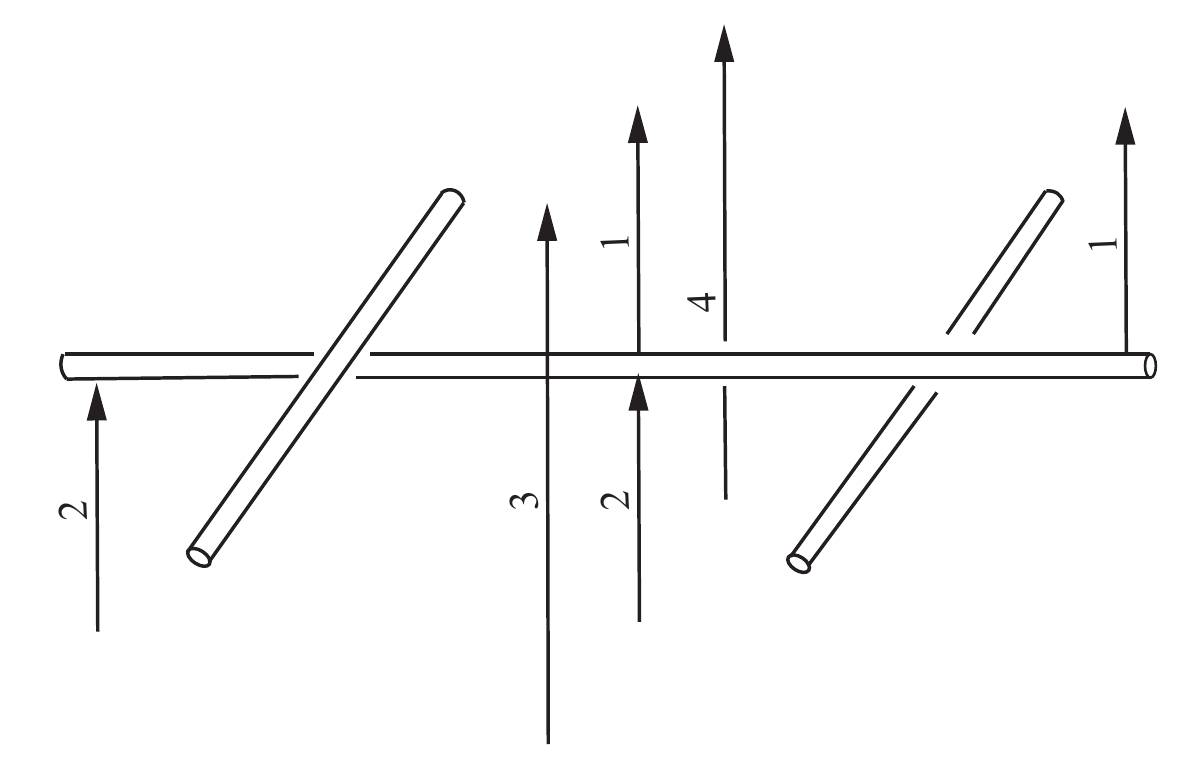}
\caption{Collapsing the edges labelled 1 and 2 causes the edges labelled 3 and 4 to collapse, eliminating the volume contribution of two octahedra and shrinking the volume contribution of another five octahedra.}
\label{collapse}
\end{center}
\end{figure} 

The collapse of the edges labelled 1 and 2 also causes the edges labelled 3 and 4 to collapse. This collapses each of the at least three octahedra corresponding to additional crossings on these regions to two tetrahedra each. It is also true that the edges labelled 1 and 2 can be slid out past the two crossings so that the octahedron to the left and the octahedron to the right each contain one of the edges that is collapsed.  Because there are no bigons, all of these octahedra are distinct. Thus, the total volume drop is $7 v_{oct} - 10 v_{tet} = 15.4972 \dots$.  
 
 When $g_3 = 0$, but $t_2 \ge 0$, we have a 6-bipyramid corresponding to a bigon chain $B$ of crossing length 2. The edges around its equator fall into four equivalence classes, two of size two and two of size one, each edge class of which connects the vertex D to the vertex U. We  collapse one of the edges  with an equivalence class of two to collapse the 6-bipyramid to a 4-bipyramid. This edge appears as the central edge of an adjacent rgion inthe projection plane and it identifies U to D.We also  collapse an edge from U to an ideal vertex. In Figure\ref{drilledpair}, we can realize these collapses by collapsing edge 7 and edge 5. This forces edge 2 to collapse as well and in fact collapses the entire face bounded by these three edges, depicted in grey at the top of Figure \ref{collapsenew}. This ultimately entirely flattens the original 6-bipyramid into two triangular faces with no volume. We also see a volume drop from any bigon chain bipyramid with a crossing in region $A$ that has its central edge collapsed. Because the diagram is twist reduced and has at least three bigon chains, one of the two adjacent regions to $B$ has at least two additional crossings on it.  We choose this to be $A$. The least volume drop comes from when there are exactly two additional crossings and they both occur in another bigon. In this case, appearing as in the projection of the link $6_3^2$ we collapse an additional 6-bipyramid to a 4-bipyramid. There are also two collapsed edge that slide out past the original bigon region $B$. Their smallest impact occurs when the other adjacent region to $B$ has only one crossing. Then these two collapsed edges turn its octahedron into a single tetrahedron.  Thus, the total volume drop is at least $2\mbox{vol}(B_6) + \mbox{vol}(B_4) - (\mbox{vol}(B_4) + v_{tet}) = 11 v_{tet}$.
 
 \begin{figure}[h]
\begin{center}
\includegraphics[scale=0.6]{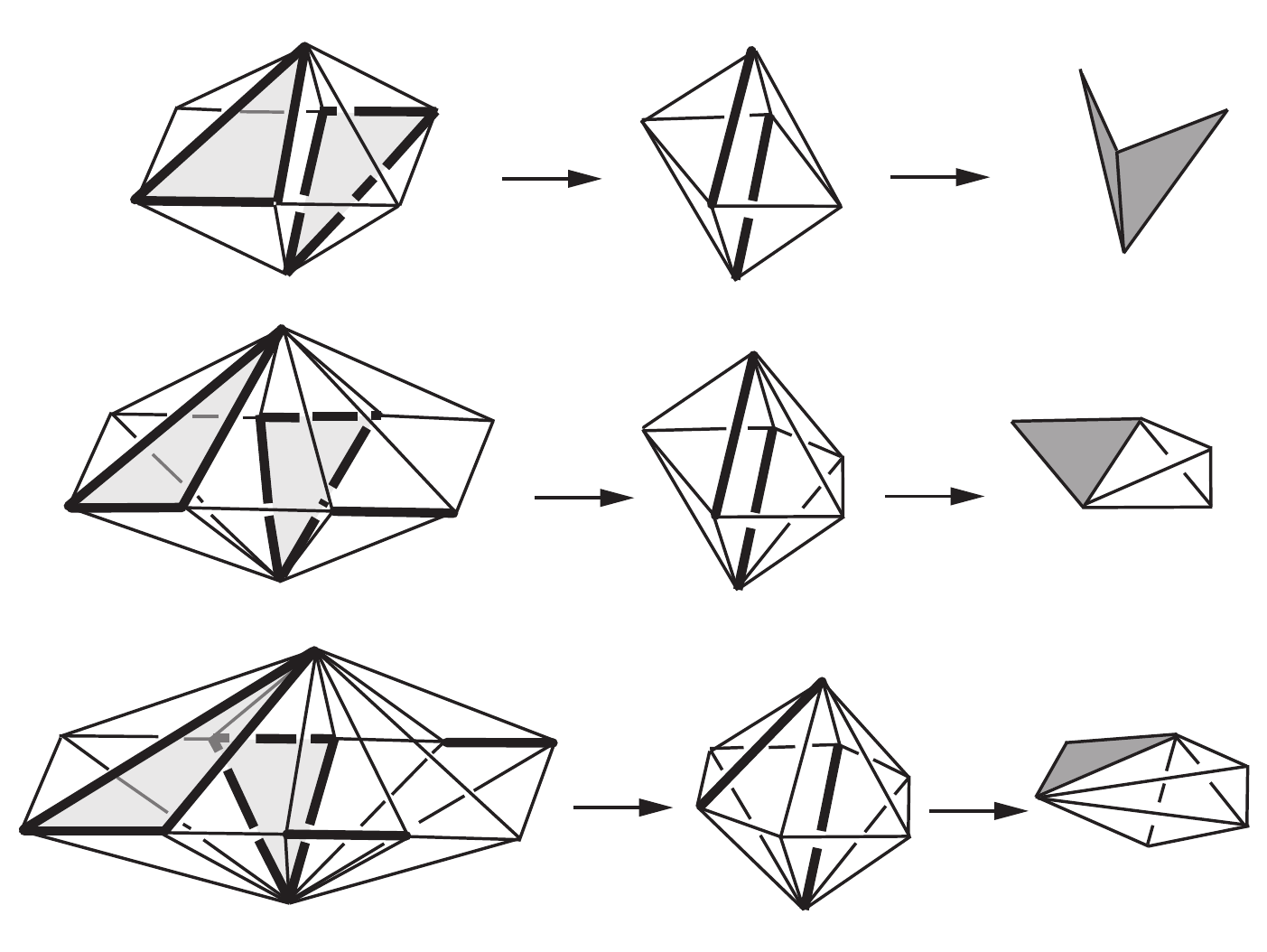}
\caption{Collapsing U and D to a cusp.}
\label{collapsenew}
\end{center}
\end{figure} 
 
 When $g_4 = 0$, but $t_3 \ge 0$, we have an 8-bipyramid corresponding to a bigon chain of crossing length 3. The edges around its equator fall into four equivalence classes, two of size three and two of size one. We  collapse one of the edges corresponding to an adjacent region $A$ with an equivalence class of three and we collapse an edge from U to an ideal vertex, which causes two faces on its boundary to collapse as in the middle of Figure \ref{collapsenew}. This takes the 8-bipyramid to one tetrahedron with an additional flat face. We also see a volume drop from any bigon chain bipyramid with a crossing in region $A$. Again we can assume there is more than one additional crossing in the region. The least volume drop comes from having two crossings on this region at the end of two bigon chains of length 3. However, then two bigon chains of length 3 are adjacent. We do better by taking a bigon chain of length 3 adjacent to a bigon chain of length 2. Then the 6-bipyramid corresponding to the bigon chain of length 2 collapses to a 4-bipyramid.  There is also an edge that slides out of this region with its minimal decrease on volume taking another 8-bipyramid to a 7-bipyramid. This corresponds to the case of the $8_6$ knot projection for instance.  Thus, the total volume drop is at least $2\mbox{vol}(B_8) + \mbox{vol}(B_6) -v_{tet} - \mbox{vol}(B_4) - \mbox{vol}(B_7) = 10.088 \dots$.

 When $g_5 = 0$, but $t_4 \ge 0$, we have a 10-bipyramid corresponding to a bigon chain of crossing length 4. The edges around its equator fall into four equivalence classes, two of size four and two of size one. We first collapse one of the edges with an equivalence class of four. This edge is the central edge perpendicular to an adjacent region $A$ in the projection plane connecting U to D. We also collapse an edge from U to an ideal vertex, causing two faces on its boundary to collapse as in the bottom of Figure \ref{collapsenew}.This collapses the 10-bipyramid to two tetrahedra and a flat face. We also see a volume drop from any bigon chain bipyramid with a crossing on the region $A$. Once again we can assume an adjacent region $A$ has two or more additional creossings on it. The least volume drop come from a bigon having both crossings on the region, in which case the corresponding 6-bipyramid collapses to two tetrahedra. The extra collapsed edge can slide out of the region and the minimal volume drop from it occurs whenit takes a 10-bipyramid to a 9-bipyramid. Thus, the total volume drop is at least $2\mbox{vol}(B_{10}) + \mbox{vol}(B_6) - 2v_{tet} - \mbox{vol}(B_4) - \mbox{vol}(B_9) \approx 10.2873$.

When $g_5 > 0$, we take the 6-bipyramid and its four associated tetrahedra as in Figure \ref{drilledpair}(c) and (d) corresponding to one of the vertical components that were added, and we collapse the edge  g created at the top of edge 8 and the edge h created at the bottom of edge 8.  There are 6 edges on the 6-bipyramid and the four associated tetrahedra in the equivalence class of each of these edges, and their collapse flattens all four tetrahedra and the entire 6-bipyramid. It also forces the collapse of edges 7 and 10. Since edges 7 and 10 are central edges to two regions adjacent to the new vertical component, their collapse will impact any bipyramids that touch these regions. The minimal volume drop comes from the case of having just one additional crossing on the first region, which is the end of a bigon chain of crossing length 4. Then the collapse of the central edge of this region will collapse a 10-bipyramid to a 9-bipyramid and drop volume by  0.6539. The other adjacent region must have more than one other crossing in it as otherwise the projection would not be twist reduced. Considering the various options, one finds that the smallest volume drop occurs when there are two bigon chains of crossing length 4 ending on this region. Then the collapse of the central edge of this region results in collapsing two 10-bipyramids to 9-bipyramids, and yields an additional volume drop of 2(0.6539). Thus, the total volume drop is at least 12.1111.
  \end{proof}
  
  Note that the excluded cases of the Borromean rings and of all reduced twist reduced alternating diagrams with just two twist regions, which all come from Dehn filling the Borromean rings, must have volume bounded above by $2 v_{oct}$.

\begin{figure}[h]
\begin{center}
\includegraphics[scale=0.6]{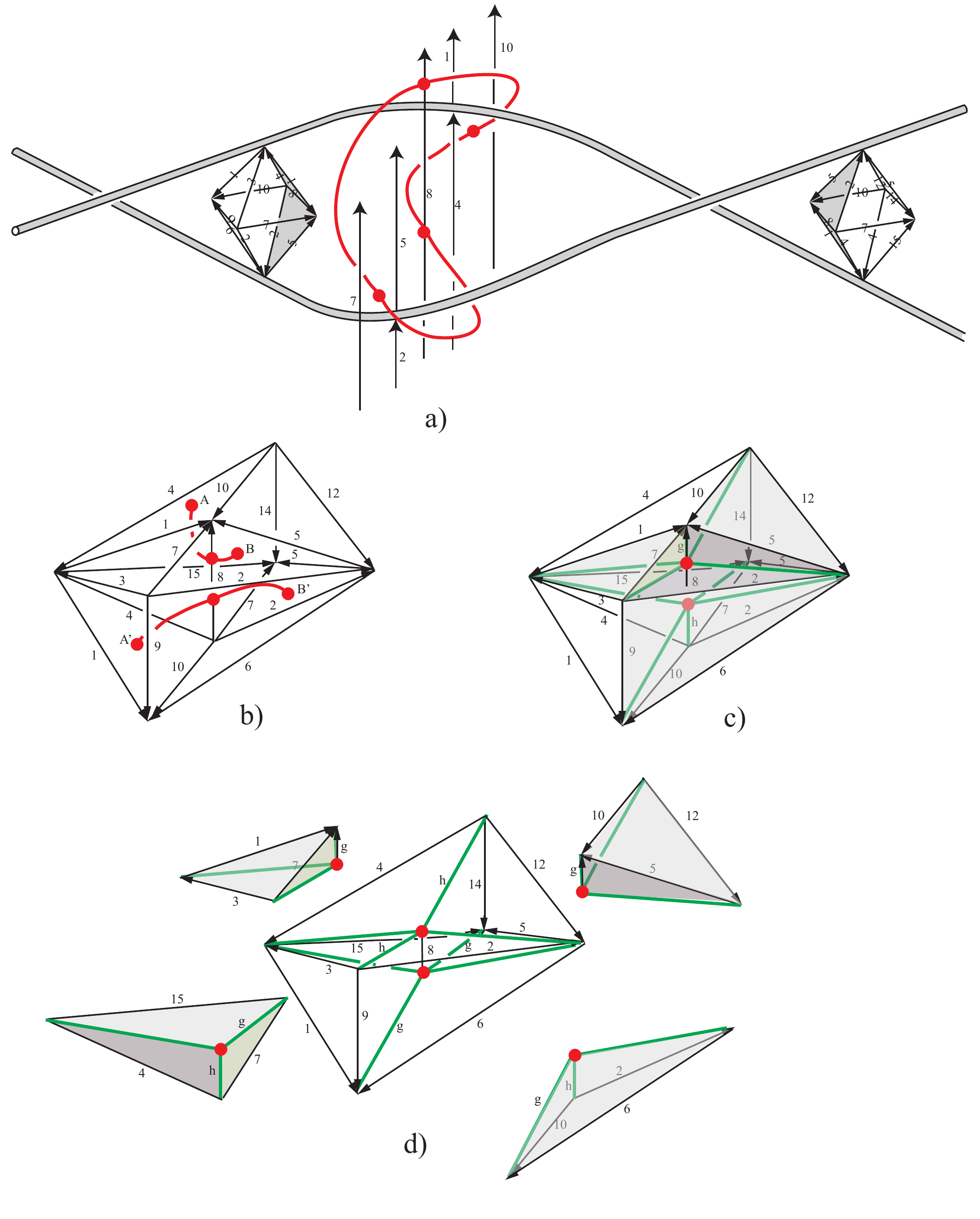}
\caption{Drilling a vertical component out of two paired octahedra.}
\label{drilledpair}
\end{center}
\end{figure} 

As in \cite{DT}, this upper bound on volume can be translated into a result in terms of coefficients of the colored Jones polynomial.
Let $$J_K(n) = \pm (a_n q^{k_n} - b_n q^{k_n -1} + c_n q^{k_n -2}) + \dots \pm (\gamma_n q^{k_n - r_n + 2} - \beta_n q^{k_n-r_n+1} + \alpha_n q^{k_n - r_n})$$ be the colored Jones polynomial of $K$, where $a_n$ and  $\alpha_n$ are positive.

\begin{thm} If $L$ is a hyperbolic alternating link with at least three twist sequences in a twist reduced diagram other than the Borromean rings, then $\mbox{vol}(S^3-L) < (10 v_{tet} - v_{oct}) ((c_2 + \gamma_2) - (c_3+\gamma_3)) - (10 v_{tet} -2v_{oct})(b_2 + \beta_2) - a$, where $ a = 10.088$ if  $ b_2 + \beta_2 \ne (c_2-c_3) + (\gamma_2-\gamma_3)$ and $a = 15.4972$ otherwise.
\end{thm}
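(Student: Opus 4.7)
The plan is to apply Theorem \ref{thm:bigonvolume} and then translate the resulting combinatorial bound into coefficients of the $2$-colored Jones polynomial via the identities from \cite{DT}, following the same scheme that is used there to convert the DT volume bound into Jones-coefficient form.

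First, I will weaken Theorem \ref{thm:bigonvolume} to a bound in only $t_1$ and $g_2$. Since $6 v_{tet}$, $\mbox{vol}(B_8) = 7.8549\ldots$, and $\mbox{vol}(B_{10}) = 9.2375\ldots$ are each strictly less than $10 v_{tet}$, the four terms $t_2(6v_{tet})$, $t_3(7.8549\ldots)$, $t_4(9.2375\ldots)$, and $g_5(10v_{tet})$ are collectively bounded above by $10 v_{tet}(t_2 + t_3 + t_4 + g_5) = 10 v_{tet} \cdot g_2$. Combined with the $t_1\, v_{oct}$ summand, this yields
$$\mbox{vol}(S^3-L) < t_1\, v_{oct} + g_2\, (10 v_{tet}) - a,$$
where $a = 15.4972$ in the case $g_2 = 0$ and $a = 10.088$ whenever $g_2 \ge 1$, the latter being the minimum of the four remaining subcase constants $11 v_{tet}$, $10.088$, $10.2873$, and $12.111$ listed in Theorem \ref{thm:bigonvolume}.

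Second, I will invoke the Dasbach--Tsvietkova identities from \cite{DT} relating extreme coefficients of the $2$-colored Jones polynomial to the twist data of a reduced twist-reduced alternating diagram. Inverting the relevant identities allows $t_1$ and $g_2$ to be expressed as integer combinations of $b_2 + \beta_2$ and $(c_2 + \gamma_2) - (c_3 + \gamma_3)$; substituting these into the bound above and collecting terms produces exactly
$$(10 v_{tet} - v_{oct})\bigl((c_2 + \gamma_2) - (c_3 + \gamma_3)\bigr) - (10 v_{tet} - 2 v_{oct})(b_2 + \beta_2) - a.$$
Under the same dictionary the combinatorial condition $g_2 = 0$, i.e.\ no twist region of crossing length $\ge 2$, corresponds to the arithmetic equality $b_2 + \beta_2 = (c_2 - c_3) + (\gamma_2 - \gamma_3)$, which produces the dichotomy on $a$ in the stated form.

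The main obstacle is pinning down and then inverting the Dasbach--Tsvietkova identities in exactly the form needed, so that both $t_1$ and $g_2$ (not merely the total twist number $t(P)$ or a single $g_i$) are cleanly expressible from the two combinations $b_2 + \beta_2$ and $(c_2 + \gamma_2) - (c_3 + \gamma_3)$, and so that no extraneous terms survive after substitution. Once those identities are in hand, what remains is routine algebraic manipulation together with the elementary bipyramid comparisons $\mbox{vol}(B_k) < 10 v_{tet}$ for $4 \le k \le 10$ used in the coarsening step.
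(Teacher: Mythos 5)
Your proposal is correct and is precisely the route the paper intends: the paper supplies no written proof of this theorem, only the remark that the BCB bound ``can be translated'' as in \cite{DT}, and your plan of coarsening Theorem \ref{thm:bigonvolume} to $t_1 v_{oct} + g_2(10v_{tet}) - a$ and then substituting the Dasbach--Tsvietkova identities $t = b_2+\beta_2$ and $g_2 = ((c_2+\gamma_2)-(c_3+\gamma_3)) - (b_2+\beta_2)$ reproduces the stated expression exactly, since $v_{oct}(2T-C) + 10v_{tet}(C-T) = (10v_{tet}-v_{oct})C - (10v_{tet}-2v_{oct})T$. The dichotomy on $a$ falls out as you describe, because $g_2=0$ is equivalent to $b_2+\beta_2 = (c_2-c_3)+(\gamma_2-\gamma_3)$ and $10.088$ is the minimum of the remaining subcase constants.
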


\section{Face-Centered Bipyramids}

We introduce  a second method for decomposing a link complement into polyhedra. Given a reduced projection of a knot or link, thought of as a projection on a sphere, let $b_i$ be the number of complementary regions of the projection with exactly $i$ edges. By Euler characteristic, it must be the case that:

$$2b_2 + b_3 =8+  b_5 + 2b_6 + 3b_7 + \dots$$

Note that the number of regions with four edges is not restricted by this formula.

There will be one $n$-bipyramid corresponding to  each complementary face with $n$ edges in the projection.To show this, we begin with the octahedral construction, placing one octahedron at each crossing with top vertex on the underside of the overcrossing and bottom vertex on the top of the undercrossing. Now we cut each octahedron into four tetrahedra along the central vertical edge, as in Figure \ref{octahedroncut}.

\begin{figure}[h]
\begin{center}
\includegraphics[scale=0.7]{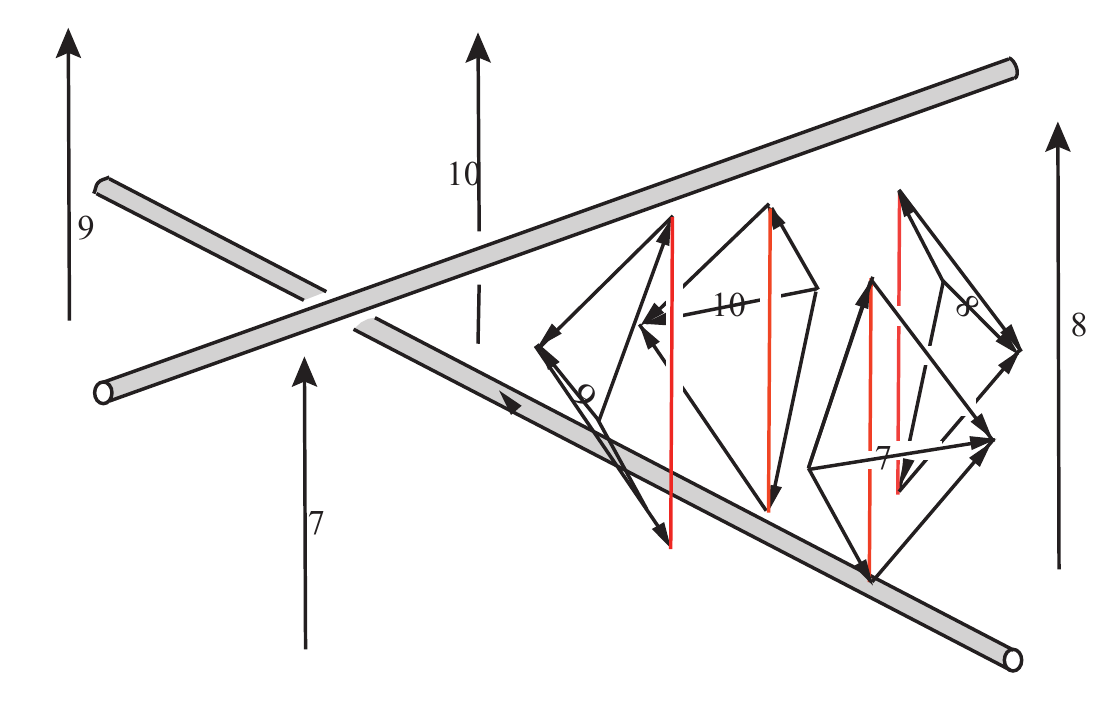}
\caption{Cutting up octahedra and reassembling them into bipyramids.}
\label{octahedroncut}
\end{center}
\end{figure}

 Once we do so, we can glue the resulting tetrahedra together along an edge perpendicular to and at the center of each complementary face. The result is a decomposition of the link complement into bipyramids, each with $n$ ideal vertices and two finite vertices. Note that if a face is a bigon, the resulting bipyramid is a 2-bipyramid, which has no volume. These ``flat" bipyramids can be glued to the adjacent bipyramids resulting in no volume contribution but rather just a change in the gluing of the faces involved.
 
 Since any $n$-bipyramid has volume no more than the ideal regular bipyramid, we see immediately that $$\mbox{vol}(S^3-L)\le \sum_{i=1}^{r} b_i\mbox{vol}(B_n)$$

 We have not yet taken advantage of the fact that the two vertices $U$ and $D$ are finite vertices. But before we drill or collapse to reduce volume, it should be noted that with those finite vertices in place,  in the case of a knot, the collection of faces on the bottom halves of the bipyramids, which all share the vertex denoted $D$ together generate the singular punctured disk that appears in  \cite{H} and subsequently \cite{Adams4}, where it was used to obtain bounds on cusp volumes. This immersed punctured disk is obtained by coning from the knot to a point beneath the projection plane, the role of which is played by $D$. Similarly, the top faces of the bipyramids form a second singular punctured disk which is obtained by coning from the knot to a point above the projection plane corresponding to the point $U$. Put the other way around, one can take any knot (or link) in an alternating projection and cone to a point above and below the projection. Then cutting the complement open along these singular disks yields the decomposition into face-centered bipyramids. 
 
 We also note that the ideal equatorial $n$-gons for each bipyramid glue together to form the union of the two checkerboard surfaces for the projection.
 
We now consider means to eliminate the finite vertices and reduce volume. We can choose the two bipyramids of largest $n$ that are not adjacent to each other, meaning the faces of the projection that generate them do not share an edge. We then drill out an additional link component $C$ that is obtained by removing the union of the two vertical central edges for the pair of bipyramids. If the result is a hyperbolic link, then the volume of the complement of our original link will be less than the volume of the complement of this link, which is at most the sum of the volumes of the ideal regular $n$-bipyramids corresponding to the complementary regions other than the two we have drilled through. The drilling collapses their central edges to ideal points, and thereby eliminates their volume contribution.

In the case that the original link is alternating in a reduced alternating diagram, the resulting link is a generalized augmented alternating link and by Theorem 2.1  of \cite{Adams2}, it is hyperbolic.

A second operation that we can do to lower the volume bound other than drilling  is collapsing. In collapsing, we choose an edge running from the cusp to $D$ and an edge running from the cusp to $U$. Then we collapse both of these edges, pulling the finite vertices to the cusp. To have the maximal effect on volume, we choose our edges to be along a strand of the original link complement that bounds two faces with as many edges as possible. See Figure \ref{collapse}.

The collapse of the edges labelled 1 and 2 causes the edges labelled 3 and 4 to collapse as well. Those two edges are central to the two bipyramids corresponding to the two faces to either side, so their volume is eliminated. It is also true that the edges labelled 1 and 2 can be slid out past the two crossings so that the two bipyramids to the left and the two bipyramids to the right each contain one of the edges that is collapsed. As in Figure \ref{collapse2}, for $n \ge 3$, such a collapse on an $n$-bipyramid results in a collection of $n-2$ tetrahedra. However, all of the resulting tetrahedra share an edge. By adding one more tetrahedron, we can complete this collection to obtain an $(n-1)-$ bipyramid. This collection of tetrahedras can therefore have no more volume than the resulting $(n-1)$-bipyramid. In the case $n \ge 12$, the resulting $(n-1)$-bipyramid has volume less than $n-1$ regular ideal tetrahedra. But in the case $3 \le n \le 11$, the $n-1$ tetrahedra have volume less than the $(n-1)$-bipyramid.   Thus we find that the volume is bounded by the sum of the volumes of all of the bipyramids, except for the two adjacent ones that we have collapsed and the four others that have each been reduced from a contribution of $\mbox{vol}(B_n)$ to a contribution of either $(n-2) v_{tet}$ or $\mbox{vol}(B_{n-1})$. Thus, we have the following theorem.

\begin{figure}[h]
\begin{center}
\includegraphics[scale=0.7]{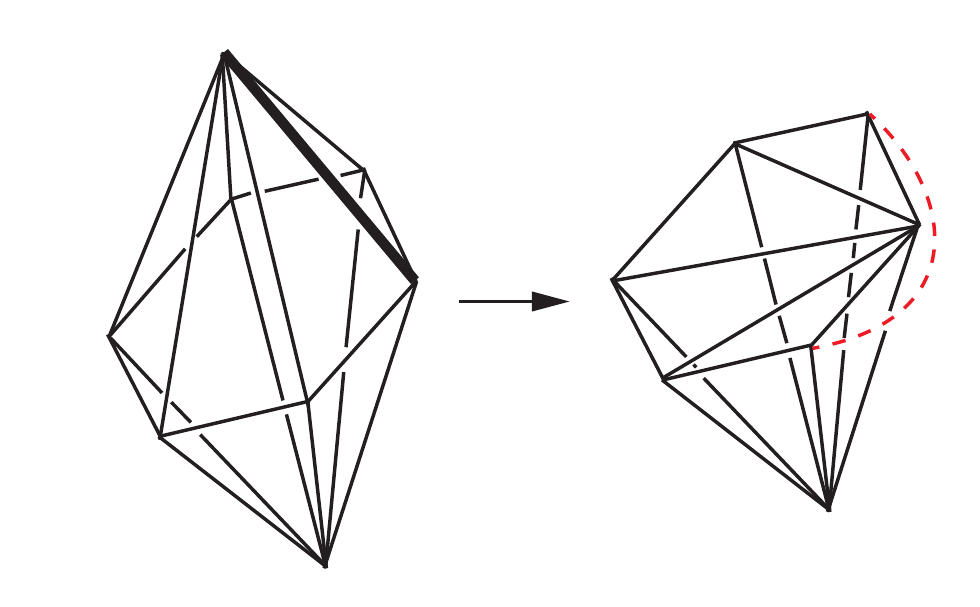}
\caption{Collapsing the thickened edge on an $n$-bipyramid  yields $n-2$ tetrahedra that all share an edge. Adding the dashed edge turns this into an $(n-1)$-bipyramid.}
\label{collapse2}
\end{center}
\end{figure} 

\begin{thm} If $L$ is a hyperbolic alternating link with reduced alternating projection with $b_i$ faces of $i$ edges, then $$\mbox{vol}(S^3 - L) \le \sum b_i \mbox{vol}(B_i) - a$$ where $a = max\{r,s\}$, with $r$ the maximum value that can be attained when $r = \mbox{vol}(B_c) + \mbox{vol}(B_d)$ for $c$ and $d$ the numbers of edges for a pair of nonadjacent faces, and $s$ the maximum value that can be attained when $$s = \mbox{vol}(B_c) + \mbox{vol}(B_d) + \sum_{j=1}^4 \mbox{vol}(B_{e_j})  - \sum_{j=1}^4 V_{e_j}$$ where $c$ and $d$ are the numbers of edges in a pair of adjacent faces $F_c$ and $F_d$, and $\{e_j\}$ are the edge numbers for the four faces that each share a crossing with both of $F_c$ and $F_d$. When $e_j \ge 12$, $V_{e_j} = \mbox{vol}(B_{e_j-1})$. When $3 \le e_j  \le 11$, $V_{e_j}= (n-2) v_{tet}$.
\end{thm}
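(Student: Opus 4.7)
The plan is to begin from the face-centered bipyramid decomposition constructed earlier in this section: place one octahedron at each crossing, slice each into four tetrahedra along its central vertical edge, and regroup those tetrahedra around the new edge perpendicular to each complementary face of the projection. Every $n$-edged face then yields a single $n$-bipyramid with two finite vertices $U$ and $D$, and Theorem 2.1 immediately bounds each contribution by $\mbox{vol}(B_n)$, giving the base inequality $\mbox{vol}(S^3-L) \le \sum_i b_i\mbox{vol}(B_i)$. The rest of the proof is devoted to extracting the savings term $a$ by eliminating the finite vertices, using whichever of two local modifications gives the larger gain.

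For the drilling tactic, I would choose a pair of nonadjacent faces $F_c$ and $F_d$ maximizing $\mbox{vol}(B_c)+\mbox{vol}(B_d)$ and drill an unknotted component encircling the union of their two central vertical edges. The resulting link $L^*$ is a generalized augmented alternating link, hence hyperbolic by Theorem 2.1 of \cite{Adams2}, and W. Thurston's drilling inequality gives $\mbox{vol}(S^3-L) \le \mbox{vol}(S^3-L^*)$. The drilled central edges collapse to ideal points, so the two corresponding bipyramids become volume-zero $2$-bipyramids, and Theorem 2.1 bounds the remaining bipyramids by their regular models, yielding the bound with $a=r$.

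For the collapsing tactic, I would select a strand of the diagram separating a pair of adjacent faces $F_c$ and $F_d$ for which $s$ is maximal, and collapse the two edges from the cusp to $U$ and $D$ running along this strand (edges $1$ and $2$ of Figure \ref{collapse}). This forces the central perpendicular edges of $F_c$ and $F_d$ to collapse, eliminating both of their contributions, and also slides out past each of the two endpoint crossings, forcing an equatorial edge of each of the four neighbouring bipyramids $B_{e_j}$ to collapse as well. The geometric content of the $V_{e_j}$ split is the observation (Figure \ref{collapse2}) that collapsing such an edge on an $n$-bipyramid leaves $n-2$ tetrahedra sharing a common edge, whose joint volume is bounded simultaneously by $\mbox{vol}(B_{n-1})$ (add one tetrahedron to reconstruct a bipyramid) and by $(n-2)v_{tet}$ (each is dominated by the regular ideal tetrahedron); the smaller of the two is $(n-2)v_{tet}$ for $3\le n\le 11$ and $\mbox{vol}(B_{n-1})$ for $n\ge 12$, which matches the stated definition of $V_{e_j}$. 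Since the two tactics are independent choices, taking $a=\max\{r,s\}$ completes the bound.

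The main obstacle is ensuring the two modifications are legitimate in the generality claimed. On the drilling side this requires verifying that a pair of nonadjacent faces always produces an augmented alternating link in the sense of \cite{Adams2}; nonadjacency is precisely the condition that prevents the augmenting component from compressing against a shared edge and failing to add a cusp. On the collapsing side it requires checking that the four affected bipyramids $B_{e_j}$ are genuinely distinct (so the individual savings $\mbox{vol}(B_{e_j})-V_{e_j}$ can be summed without double-counting) and that the sliding-out of edges $1$ and $2$ really realises the claimed equatorial edge-collapse on each $B_{e_j}$ rather than some degenerate identification. Both of these use the reduced alternating hypothesis in an essential way, and once they are pinned down the rest of the argument is the volume accounting already carried out above.
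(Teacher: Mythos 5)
Your proposal follows essentially the same route as the paper: the base inequality from the face-centered bipyramid decomposition, the drilling of the union of the two central vertical edges over nonadjacent faces (hyperbolicity via Theorem 2.1 of \cite{Adams2} plus Thurston's volume inequality) to get $r$, the collapse of the two cusp-to-$U$ and cusp-to-$D$ edges along a strand between adjacent faces with the slid-out edges degrading the four neighboring bipyramids to get $s$, and the same two-way comparison of $(n-2)v_{tet}$ against $\mbox{vol}(B_{n-1})$ that defines $V_{e_j}$. The verification points you flag at the end (distinctness of the four affected bipyramids, legitimacy of the augmentation) are reasonable and are left implicit in the paper as well.
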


\begin{ex} We consider the figure-eight knot. Its standard projection consists of complementary regions with four triangles and two bigons. Choosing an edge between two triangles, we collapse away the two 3-bipyramids to either side of the edge. At each end of the edge, there is also a 2-bipyramid and a 3-bipyramid. Collapsing an edge on the 2-bipyramid does nothing since it already contributes zero volume. Collapsing an edge  on the  3-bipyramid yields a singe tetrahedron. So we know that the complement of the figure-eight knot has a volume of at most $2 v_{tet}$. This is in fact the actual volume of the complement of the figure-eight knot, demonstrating that there is at least one knot for which this upper bound on volume is exact.
\end{ex}

Note  that in the case of an alternating hyperbolic link,  between drilling and collapsing of face-centered bipyramids, we know that we can eliminate the volume contribution of any two bipyramids, whether they are adjacent or not.

Of course, we would like to do better. We would like to be able to drill out additional link components to eliminate other face-centered bipyramids of high $n$. 

If we have two faces that share a bigon chain, and we would like to remove the bigon chain by drilling out a vertical component, as in the Agol-Thurston construction,  we use the method depicted in Figure \ref{drilledpair}. That is, we replace the bigon chain by a bigon with two crossings, we insert two octahedra at the crossings, we glue them together and drill out the vertical component from them, and then decompose them into four tetrahedra and one 6-bipyramid. Then, as described previously, we take all other crossings and fill them with octahedra, and then decompose the other octahedra into four tetrahedra. We then glue the resulting tetrahedra together along the vertical edges through the centers of all faces. When it comes to the two crossings along our bigon, we glue the four individual tetrahedra in to the bipyramids corresponding to  the two faces adjacent to the bigon. The resulting $n$ for each $n$-bipyramid that results has  $n$-value corresponding to the link diagram without the extra vertical component but with the inserted bigon.

If we begin with a bigon chain of crossing length $k$, then by drilling the corresponding vertical component, we switch two bipyramids corresponding to the adjacent faces from $r$ and $s$ to $r-k+2$ and $s-k+2$. However, we have also replaced the chain of 2-bipyramids given by the chain of bigons, all of which had zero volume, by a single 6-bipyramid.  So it only makes sense to drill when $\mbox{vol}(B_r) + \mbox{vol}(B_s) > \mbox{vol}(B_{r-k+2}) + \mbox{vol}(B_{s-k+2}) + 6.089$.

\subsection{Flyping}

Given a particular reduced alternating projection of a hyperbolic link, we can use face-centered bipyramids to obtain an upper bound on the volume of the link complement. One would like to know that the bound obtained is as low as possible.

\begin{thm} The smallest upper bound on volume obtained from a reduced alternating projection of a hyperbolic link $L$ via face-centered bipyramids occurs in a twist reduced diagram.
\end{thm}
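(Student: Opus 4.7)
The plan is to show that any single flype which merges two distinct bigon chains in a reduced alternating diagram cannot increase the FCB bound; since a diagram that is not twist reduced admits such a flype by definition, iterating the finitely many times needed to consolidate all bigon chains produces a twist reduced diagram with bound no larger than the original. Concretely, let $P$ be the reduced alternating projection and let $P'$ be obtained from $P$ by a single merging flype, moving a crossing $c$ from one side of a 2-tangle $T$ to the other while rotating $T$ by $\pi$, so that two previously distinct bigon chains of crossing lengths $k_1$ and $k_2$ merge into a single chain of crossing length $k_1+k_2$. All faces strictly interior to $T$, and strictly exterior to $T\cup\{c\}$, are unchanged, so only a bounded local collection of faces is affected.

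A crossing/edge count then pins down the local face change. Since the number of crossings and edges is preserved by any flype, while the merge produces exactly one new bigon, two of the affected faces, of sizes $m$ and $n$, must be replaced by a single face of size $m+n-2$. Hence the change in $\sum_i b_i\,\mbox{vol}(B_i)$ equals
\[
\mbox{vol}(B_2)+\mbox{vol}(B_{m+n-2})-\mbox{vol}(B_m)-\mbox{vol}(B_n) \;=\; \mbox{vol}(B_{m+n-2})-\mbox{vol}(B_m)-\mbox{vol}(B_n).
\]
The required inequality is the subadditivity statement $\mbox{vol}(B_{m+n-2})\le \mbox{vol}(B_m)+\mbox{vol}(B_n)$ for $m,n\ge 2$, which I would deduce from concavity of $n\mapsto \mbox{vol}(B_n)$. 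Concavity is evident from the monotonically decreasing successive differences of Table~\ref{bipyramidvol} and is consistent with the asymptotic bound $\mbox{vol}(B_n)<2\pi\ln(n/2)$ of the previous section; a standard rearrangement using $\mbox{vol}(B_2)=0$ then reduces subadditivity to concavity.

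For the correction term $a=\max\{r,s\}$ in Theorem 4.1, merging bigon chains only enlarges the pool of (adjacent or non-adjacent) face pairs available for the drilling and collapsing operations, so the optimal $a$ cannot shrink upon flyping. Combining the two pieces, the full FCB bound $\sum_i b_i\,\mbox{vol}(B_i)-a$ is non-increasing under each merging flype. The main obstacle I anticipate is the combinatorial case analysis verifying the $(m,n)\mapsto (m+n-2)$ face-merger pattern in every configuration of the merging flype (depending on whether the two bigon chains share one or both flanking long faces, and on any additional bigon structure carried by $T$); once this local bookkeeping is uniformly established, induction on the finite number of distinct bigon chains completes the proof.
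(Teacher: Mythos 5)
Your overall strategy is the same as the paper's: analyze the local effect of a merging flype, observe that (since crossings and edges, hence faces, are conserved) two faces of sizes $m$ and $n$ are replaced by one face of size $m+n-2$ together with a new bigon, and reduce the theorem to the inequality $\mbox{vol}(B_{m+n-2})\le \mbox{vol}(B_m)+\mbox{vol}(B_n)$. That bookkeeping matches the paper exactly. The genuine gap is that you never actually prove this key inequality. You derive it from concavity of $n\mapsto \mbox{vol}(B_n)$, but your only evidence for concavity is that the successive differences in Table \ref{bipyramidvol} decrease and that the statement is ``consistent with'' the logarithmic asymptotic bound; a finite table of values and consistency with an asymptotic cannot establish concavity for all $n$, and nothing in the paper up to this point supplies it. The paper closes exactly this step by a different and more elementary route: writing $\mbox{vol}(B_a)=a\,\mbox{vol}(T_a)$ and using that $\mbox{vol}(T_a)$ is decreasing in $a$ for $a\ge 6$, one gets $a\,\mbox{vol}(T_a)+b\,\mbox{vol}(T_b)\ge (a+b)\mbox{vol}(T_b)\ge (a+b-2)\mbox{vol}(T_{a+b-2})$, with a separate numerical check $\mbox{vol}(B_a)>(a-1)v_{tet}$ handling $a=3,4,5$. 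You would need either this monotonicity argument or an honest proof of concavity (e.g., via the second derivative of $n\,\mbox{vol}(T_n)$) to make your reduction complete.

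A secondary issue: your treatment of the correction term $a=\max\{r,s\}$ asserts that merging ``only enlarges the pool of face pairs,'' but merging two faces into one strictly decreases the number of faces and hence of candidate pairs, and it can change which pairs are adjacent; the claim as stated is not justified and is arguably backwards. The paper sidesteps this entirely by only comparing the raw sums $\sum b_i\,\mbox{vol}(B_i)$, so you are attempting to prove something slightly stronger than the paper does, without the supporting argument. Finally, you correctly flag the combinatorial case analysis of the flype configurations as an anticipated obstacle but do not carry it out; the paper handles it by first flyping all but one crossing across (which leaves all face counts unchanged) and only then analyzing the single remaining merge.
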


\begin{proof} We show that if a diagram of $L$ is not twist-reduced, the sum of the volumes of the corresponding face-centered bipyramids can be reduced.
Suppose $P$ is a reduced alternating diagram that is not twist-reduced. Then there must be a portion of the diagram with crossings to either side of a tangle as in Figure \ref{flype}. 

\begin{figure}[h]
\begin{center}
\includegraphics[scale=0.7]{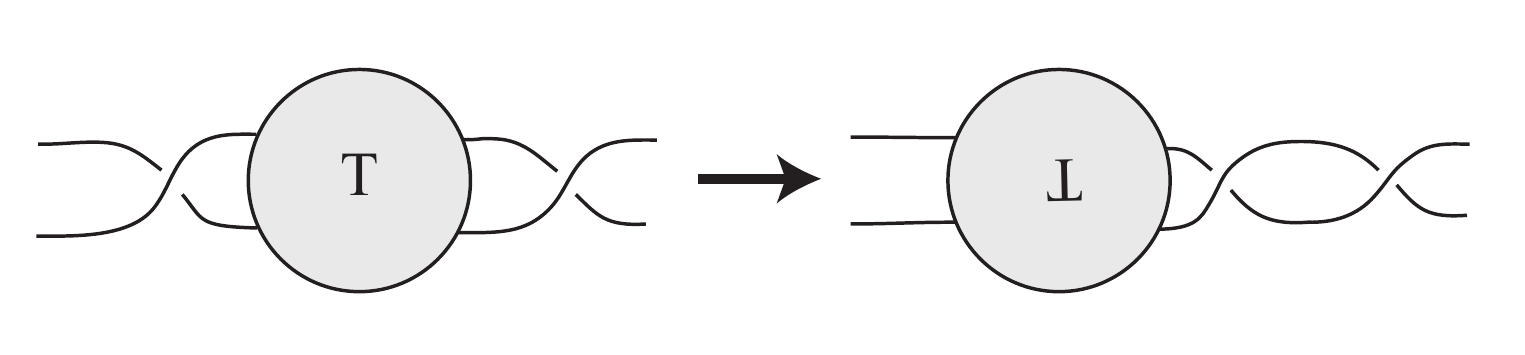}
\caption{Flyping to twist reduce lowers the volume bound.}
\label{flype}
\end{center}
\end{figure} 

If there are bigons to the outer side of both of these crossings, then we can flype to move all but one of the crossings to one side of the tangle, without changing the numbers of $n$-bipyramids for all $n$. However, when we are down to only one crossing remaining to one side, then when we flype to twist-reduce and move it across, we replace an $a$-bipyramid and a $b$-bipyramid with a single $(a+b-2)$-bipyramid. Thus it is enough to show that $\mbox{vol}(B_a) + \mbox{vol}(B_b)   > \mbox{vol}(B_{a+b-2})$ for all $a,b \geq 3$.
Assume that $b \geq a$. Suppose first that $a \geq 6.$ Note that the tetrahedron $T_a$, with angles $\frac{2\pi}{a}, \frac{(a-2)\pi}{2a}$  and $\frac{(a-2)\pi}{2a}$ has volume a decreasing function in $a$ for $a \geq 6$. Then 

\begin{align}\mbox{vol}(B_a) + \mbox{vol}(B_b) &  = a \mbox{vol}(T_a) + b \mbox{vol}(T_b) \\ 
& \geq (a+b)\mbox{vol}(T_b)\\ 
& \geq (a+b-2) \mbox{vol}(T_{a+b-2}) = \mbox{vol}(B_{a+b-2})
\end{align}

In the case that $a = 3$, 4 and 5, one can check that $\mbox{vol}(B_a) > (a-1) (1.01494...)$.

Hence, we have

\begin{align}
\mbox{vol}(B_a) + \mbox{vol}(B_b) &  \geq (a-1)(1.01494...) + b \mbox{vol}(T_b)\\ 
& \geq (a+b-1)\mbox{vol}(T_b)\\
&  \geq (a+b-2) \mbox{vol}(T_{a+b-2}) = \mbox{vol}(B_{a+b-2})
\end{align}

\end{proof}

\section{Comparisons of the upper bounds}

In this section, we compare the bounds we have discussed. Note first that the octahedral upper bound is always better than the tetrahedral upper bound   for $c \ge 6$. Similarly, the BCB bound always improves the DT bound, which itself improved the AT bound.

For low crossing number, the FCB (face-centered bipyramid) bound always seems to do better than the BCB (bigon chain bipyramid) bound. However, if one chooses a link such as the one appearing in 
Figure \ref{pentagonal}, but with $n^2$ hexagonal faces, then we can think of the volume contribution to the BCB bound of each bigonal chain of length 3 weighted against the volume contribution of its two adjacent hexagonal faces tom the FCB bound. We add in the volume contribution of each singleton crossing to the BCB bound and weight it against the volume contribution of the four adjacent faces to the FCB bound. Therefore, the contribution to the FCB bound of a hexagonal face is $\mbox{vol}(B_6) = 6.0896$. The fractional contribution to the BCB bound of the adjacent bigon sequence and singleton crossings is $\mbox{vol}(B_8)/2 + 2(\mbox{vol}(B_4)/4 = 5.7614$. Thus the BCB bound beats the FCB bound by 0.3282 for every additional hexagonal face in the link complement. Although faces on the boundary that have fewer edges than hexagons mediate this effect, their number grows with $n$ rather than $n^2$.  So for large enough $n$, the BCB bound will always be lower than the FCB bound.

\begin{figure}[h]
\begin{center}
\includegraphics[scale=0.7]{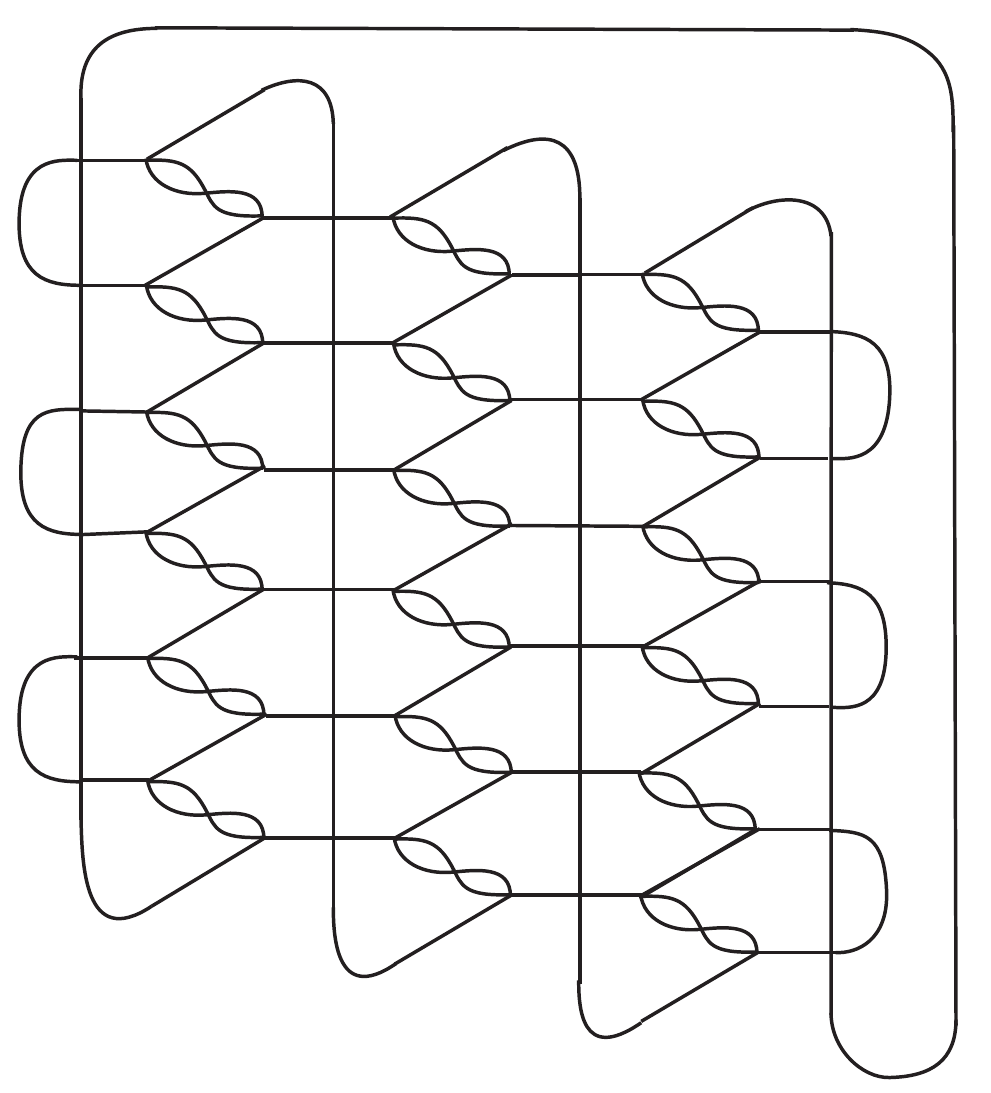}
\caption{For large enough $n$, the BCB bound is lower than the FCB bound.}
\label{pentagonal}
\end{center}
\end{figure} 

However, at least for small crossing number, the FCB bound appears more effective, so we will focus on it. In calculating a volume bound using the FCB bound, one can either drill two non-adjacent bipyramids or one can collapse as described in the discussion of the algorithm for the bound. In the case of certain knots and links, drilling can do better than collapsing, as occurs with $9_{35}$ for instance.  The same holds true for any $(3,3, \dots, 3)$-pretzel link or $(4,4, \dots, 4)$-pretzel link, with at least three 3's or 4's. Other than $4_1$ and $5_2$,  this also holds for the twist knots, where in all cases, drilling yields a bound of $4v_{tet}$, and the sequence of volumes are known to approach $v_{oct} = 3.6638\dots$ from below.

 In the case of $7_1^2$, drilling and collapsing in the FCB bound yield the same value. But more generally, at least for low crossing number, collapsing yields a better bound than does drilling. In the two cases of $4_1$ and $6_2^2$, the FCB bound obtained by collapsing yields the actual volume, namely $2v_{tet}$ and $4v_{tet}$ respectively. For hyperbolic knots of seven or fewer crossings, the FCB bound is within 4.1\% of the actual volume for $7_5$, within 10.6\% for the additional knots $5_2, 6_3, 7_3, 7_5, 7_6$ and $7_7$. Excluding the twist knots $6_1$ and $7_2$, all the hyperbolic knots of seven or fewer crossings have FCB bound within 19\% of the actual volume.
 
 In the case of $5_1^2$, both the FCB bound and the octahedral bound yield the same bound of $4v_{tet}$, whereas the actual volume is $v_{oct}$.  But in general, the FCB bound beats the octahedral bound.

\end{document}